\newtheoremstyle{mythm}
{6pt}
{6pt}
{\it}
{}
{\bf}
{.}
{.5em}
{}
\newtheoremstyle{mydef}
{6pt}
{6pt}
{}
{}
{\bf}
{.}
{.5em}
{}
\newtheoremstyle{myrem}
{6pt}
{6pt}
{}
{}
{\bf}
{.}
{.5em}
{}
\theoremstyle{mythm}      
\newtheorem{theorem}{Theorem}
\newtheorem{lemma}[theorem]{Lemma}
\newtheorem{proposition}[theorem]{Proposition}
\newtheorem{corollary}[theorem]{Corollary}
\theoremstyle{mydef}      
\newtheorem{definition}[theorem]{Definition}
\newtheorem{example}[theorem]{Example}
\theoremstyle{myrem}
\newtheorem{remark}[theorem]{Remark}
\numberwithin{equation}{section}
\newcommand{\bullit}{\item[$\bullet$]}
\newcommand{\ad}{{\rm ad}}
\newcommand{\Span}{{\rm Span}}
\newcommand{\SL}{{\mathfrak{sl}}}
\newcommand{\lcm}{{\rm lcm}}
\newcommand{\sign}{{\rm sgn}}
\newcommand{\g}{{\mathfrak g}}
\newcommand{\fa}{{\mathfrak a}}
\newcommand{\fk}{{\mathfrak k}}
\newcommand{\fp}{{\mathfrak p}}
\newcommand{\fs}{{\mathfrak s}}
\newcommand{\ft}{{\mathfrak t}}
\newcommand{\h}{{\mathfrak h}}
\newcommand{\uu}{{\mathfrak u}}
\newcommand{\C}{\mathbb{C}}
\newcommand{\Q}{\mathbb{Q}}
\newcommand{\SqrtField}{\mathbb{Q}^{\sqrt{}}}
\newcommand{\R}{\mathbb{R}}
\newcommand{\Z}{\mathbb{Z}}
\newcommand{\Aut}{{\rm Aut}}
\newcounter{ithmcount}
\newenvironment{items}{
\begin{list}{$\alph{item})$}
{\labelwidth18pt \leftmargin18pt \topsep3pt \itemsep5pt \parsep0pt}}
{\end{list}}
\newenvironment{iprf}{\begin{list}{{\rm
	\alph{ithmcount})}}{\usecounter{ithmcount}\labelwidth-5pt
      \leftmargin0pt \topsep3pt \itemsep1pt \parsep2pt}}{\qedhere\end{list}}
\newenvironment{ithm}{\begin{list}{{\rm \alph{ithmcount})}}{\usecounter{ithmcount}\labelwidth18pt
      \leftmargin18pt \topsep3pt \itemsep1pt \parsep2pt}}{\end{list}}
\begin{document}

\vspace*{-2cm}

\title{A computational approach to the Kostant-Sekiguchi correspondence}
\author[H.\ Dietrich]{Heiko Dietrich}
\author[W.\ A.\ de Graaf]{Willem A.\ de Graaf}
\address{Department of Mathematics, University of Trento, Povo (Trento), Italy}
\email{dietrich@science.unitn.it}
\email{deGraaf@science.unitn.it}
\thanks{This work was supported by a Marie-Curie Fellowship of the European Commission (grant no.\ PIEF-GA-2010-271712).}
\subjclass[2010]{20G20, 17B45}

\begin{abstract}
Let $\g$ be a real form of a simple complex Lie algebra. Based on  ideas of \DJ okovic and Vinberg, we describe an algorithm to compute representatives 
of the nilpotent orbits of $\g$ using the Kostant-Sekiguchi correspondence. 
Our algorithms are implemented for the computer algebra system {\tt GAP} and, as an application, 
we have built a database of nilpotent orbits of all real forms of simple complex Lie algebras of rank at most 8. 
In addition, we consider two real forms $\g$ and $\g'$ of a complex simple Lie algebra 
$\g^c$ with Cartan decompositions $\g=\fk\oplus\fp$ and $\g'=\fk'\oplus\fp'$. We describe an 
explicit construction of an isomorphism $\g\to\g'$, respecting the given Cartan decompositions, which fails if and only if $\g$ and $\g'$ are not isomorphic. This isomorphism can be used to map the representatives of the nilpotent orbits of $\g$ to other
realisations of the same algebra.
\end{abstract}

\keywords{Kostant-Sekiguchi correspondence; real nilpotent orbit; real Lie algebra.}

\maketitle

\section{Introduction}

\noindent When considering the action of a Lie group on its Lie algebra, the question arises as to 
what its orbits are. This question has mainly been studied for complex simple Lie algebras
$\g^c$, with their adjoint groups $G^c$. Particularly the theory concerning nilpotent
orbits (that is, $G^c$-orbits consisting of nilpotent elements) has seen many interesting developments
over the past decades; we refer to the book of Collingwood \& McGovern \cite{CM} for a detailed account. These orbits have been classified in terms of combinatorial
objects called weighted Dynkin diagrams, using a beautiful connection between nilpotent
orbits and orbits of $\SL_2$-triples. If $\g^c$ is of classical type, then the nilpotent
orbits also have been classified in terms of certain sets of partitions (of the dimension
of the natural representation). 

For real Lie algebras $\g$, with the action of the 
adjoint group $G$, it is much harder to classify the nilpotent ($G$-)orbits. The main problem compared to the complex case is that a weighted Dynkin diagram can correspond to 
several nilpotent orbits. To illustrate this phenomenon consider 
$G^c = \text{PSL}_n(\C)$ and $G= \text{PSL}_n(\R)$ with their Lie algebras 
$\g^c = \SL_n(\C)$ and $\g = \SL_n(\R)$. The nilpotent orbits in $\g^c$ are parametrised
by partitions of $n$, whose parts correspond to the sizes of the Jordan blocks of a
representative of the orbit. The nilpotent orbits in $\g$ are associated with the same partitions, with the difference that the partitions with only even terms 
correspond to two nilpotent orbits.  More generally, the nilpotent orbits of the simple real Lie algebras of classical type have been classified 
in terms of combinatorial objects such as partitions or certain types of Young diagrams, see \cite[Section 9.3]{CM}. For the classification in Lie algebras of exceptional types the main ingredient is the Kostant-Sekiguchi correspondence: Let  $\g=\fk\oplus\fp$ be a Cartan decomposition of the simple real Lie algebra $\g$ 
with complexification $\g^c=\fk^c\oplus \fp^c$. Let $G^c$ be the adjoint group of $\g^c$ and 
denote by $G$, $K$, and $K^c$ the connected Lie subgroups of $G^c$ with corresponding Lie 
algebras $\g$, $\fk$, and $\fk^c$, respectively. The Kostant-Sekiguchi correspondence  states a 
one-to-one correspondence between the nilpotent orbits in $\g$ and the nilpotent $K^c$-orbits 
in $\fp^c$.  Although this correspondence can be described explicitly (as we will do in Section \ref{secKS}), it is difficult to obtain 
concrete representatives of nilpotent orbits in $\g$. Most classification results therefore are 
on the \emph{complex side}, that is, consider nilpotent $K^c$-orbits in $\fp^c$, 
see for example \cite{djokInner, galina, noela, noelb, noel}. However, in tedious work,  \DJ okovic \cite{djokG2, djokE7, djokE8} has used this correspondence  to obtain representatives of the nilpotent orbits for each  of the simple real Lie algebras of exceptional type.

The aim of this paper is to describe methods for constructing  representatives of the nilpotent orbits
of a real simple Lie algebra on a computer. One approach to obtain representatives is to take the existing classifications in the literature, to set up isomorphisms to the algebras
given, and to map the given representatives. However, it is not straightforward to verify the correctness of the representatives given in the literature, 
so this approach is rather error prone. (In fact, in each of his papers \cite{djokG2, djokE7, djokE8} \DJ okovic corrected some errors, due to typos, in his previous papers.) 
For this reason we devise algorithms that effectively carry out the Kostant-Sekiguchi 
correspondence. Since the correctness of each step can be checked algorithmically, we get a
certified list of representatives. 

\subsection{Main results} We describe computational methods to achieve the following three aims:
\begin{items}
\item[A)] Construct isomorphism type representatives for all real forms of a simple complex Lie algebra.
\item[B)] Construct representatives of all nilpotent orbits of a real form constructed in A).
\item[C)] Construct an isomorphism between two given  real forms of a simple complex Lie algebra.
\end{items}

For computational purposes it is often needed that the Lie algebras are given by means of a multiplication table (with respect to some basis). We describe in Section \ref{secPrelimLA} how to construct multiplication tables for all real forms of simple complex Lie algebras (up to isomorphism). 

In Sections \ref{secKS} - \ref{secGetCT} we describe our algorithms to construct representatives of the nilpotent orbits of a Lie algebra constructed in A).  We combine the Kostant-Sekiguchi 
correspondence, see Section \ref{secKS}, with the theory of carrier algebras developed by 
Vinberg \cite{vin87}, see Section \ref{sec:carralg}. This is inspired by \DJ okovic'
proof of the Kostant-Sekiguchi correspondence, see \cite{djok87}.
In Section \ref{secCS} we discuss the construction of so-called Chevalley systems; 
results obtained there will also be important for C). In Section \ref{secGetCT} we discuss the 
main computational problem for applying the Kostant-Sekiguchi correspondence, namely, 
to construct a complex Cayley triple in a $K^c$-orbit of homogeneous 
$\SL_2$-triples; we give more details in Section \ref{secKS}. 

In order to use our lists of representatives of nilpotent orbits also in
other realisations of the Lie algebras (for instance in the split real forms,
in their natural representation), we devise algorithms to construct isomorphisms between 
real simple Lie algebras. More precisely,  in Section \ref{secIso} we discuss the isomorphism problem for two real forms  $\g$ and  $\g'$ of a complex simple Lie algebra $\g^c$. If $\g=\fk\oplus\fp$ and $\g'=\fk'\oplus\fp'$ 
are given Cartan decompositions, then we describe an explicit construction of an isomorphism 
$\g\to\g'$, respecting the given Cartan decompositions, 
which fails if and only if such an isomorphism does not exist. 

\subsection{Related work}
\DJ okovic has first used the Kostant-Sekiguchi correspondence to obtain representatives of nilpotent orbits for the real forms of Lie algebras of exceptional type. His methods used in \cite{djokG2,djokE7,djokE8} vary somewhat
from paper to paper. However, in all these publications the main idea is to start with
a complex nilpotent orbit $\mathcal{O}^c\subset \g^c$ meeting $\g$ non-trivially. Then some real representatives of 
$\mathcal{O}^c$ in $\g$ are computed. The Kostant-Sekiguchi correspondence is used to
decide whether these real representatives lie in the same $G$-orbit or not. The process stops 
when enough elements lying in different $G$-orbits are found. This ad hoc approach has worked for the
Lie algebras of exceptional type, but there is no guarantee that it will always yield 
representatives of all nilpotent orbits. Furthermore, it is rather tedious to apply and 
difficult to translate into a systematic approach suitable for a computer.
  
In our approach the problem is reduced to finding a complex Cayley triple in
a carrier algebra. Most carrier algebras that occur are {\em principal} and for those we have an automatic procedure for finding the triple (see Section  
\ref{secPrincCase}).
However, some carrier algebras are not principal, and for those we translate the problem
into a set of polynomial equations that has to be solved. For dealing with the latter 
problem we use a simple-minded systematic technique (see Section \ref{secNonPrincCase}) which turned out to work well in all our examples, which include all Lie algebras of rank at most 8.

\subsection{Computational remark}
Our algorithms are implemented for the computer algebra system {\tt GAP} \cite{gap}, as
part of a package for doing computations with real Lie algebras, called {\tt CoReLG}. The functions for obtaining the multiplication tables of the real simple Lie algebras
in this package have been implemented by Paolo Faccin; these implementations will be described elsewhere. As an application, we created a database containing representatives of nilpotent orbits 
for all simple real forms of rank at most 8; this database will also be contained in
the package {\tt CoReLG}. As mentioned in the previous paragraph, we construct certain complex Cayley triples in carrier algebras. It is possible that isomorphic carrier algebras will turn up when dealing
with different simple Lie algebras. To avoid dealing with the same problem twice, we have also built a database of non-principal
carrier algebras, together with the Cayley triples that we found (see Section \ref{secDBCA}).

Our approach works uniformly for all simple real Lie algebras. However, our database is currently
limited to the Lie algebras of ranks up to 8 for two reasons. Firstly, it includes all 
exceptional types. Secondly,  in the {\tt SLA} package, the current implementations
of the algorithms for listing the nilpotent orbits of a $\theta$-group are not very efficient
when $\theta$ is an outer automorphism. This makes it currently difficult to go beyond
rank 8 when the real form is defined relative to an outer involution.

There is the question of which base field to use for the computations. The Lie algebras 
that we work with are
defined over $\R$ or $\C$. However, we want to perform exact computations, and the field $\Q$
is not suitable as we often need square roots of rational numbers. For this reason we 
work over the field $\SqrtField=\Q(\{\sqrt{p}\mid p\textrm{ a prime}\})$. 
In Appendix \ref{secSF} we indicate how the arithmetic of that field is implemented.
Since we often work in the complex Lie algebra $\g^c$ in order to obtain results in the
real Lie algebra $\g$, we also use the field $\SqrtField(\imath)$ where $\imath=\sqrt{-1}\in\C$.

\subsection{Notation}\label{secNot}
Throughout this paper we retain the previous notation and denote by $\theta$ the Cartan 
involution associated with the Cartan decomposition  $\g=\fk\oplus\fp$. By $\g^c=\fk^c\oplus 
\fp^c$ we denote the complexification of $\g$, and $\sigma$ is the 
complex conjugation of $\g^c$ with respect to $\g$. 
By abuse of notation, we also denote by $\theta$ its extension to $\g^c$. 
Let $G^c$ be the adjoint group of $\g^c$ and denote by $G$, $K$, and $K^c$ the connected Lie 
subgroups of $G^c$ with corresponding Lie algebras $\g$, $\fk$, and $\fk^c$, respectively.

\section{Constructing the Lie algebras}\label{secPrelimLA}
\noindent The aim of this section is to describe the construction of the real forms we consider. Our computational setup is as in \cite{deGraafBook}, that is, in our algorithms we suppose the Lie algebras are given by multiplication tables, usually with respect to Chevalley bases. For the sake of completeness, we first recall the relevant definitions, and then construct certain bases of all real forms (up to isomorphism) of simple complex Lie algebras.

\subsection{Canonical generators}\label{sec:cons}
Let $\g^c$ be a complex semisimple Lie algebra with Cartan subalgebra $\h^c$. Let  $\Phi$ be the corresponding root system with basis of simple roots $\Delta=\{\alpha_1,\ldots,\alpha_\ell\}$. 
Then $\g^c$ has a {Chevalley basis} with respect to $\Phi$, see \cite[Section 25.2]{humph}:

\begin{definition} A basis $\{h_1,\ldots,h_\ell,x_\alpha\mid \alpha\in\Phi\}$ of $\g^c$ is a 
\emph{Chevalley basis} if  $\{h_1,\ldots,h_\ell\}$ spans the 
Cartan subalgebra $\h^c$ of $\g^c$, and for all $\alpha,\beta\in \Phi$ the following hold:
\begin{items}
\bullit $x_\alpha$ spans the root space $\g_\alpha=\{x\in\g^c\mid \forall i\colon [h_i,x]=\alpha(h_i)x\}$ corresponding to $\alpha$,
\bullit $[x_\alpha,x_{-\alpha}]=h_\alpha$, where $h_\alpha$ is the unique element in $[\g_\alpha,\g_{-\alpha}]$ with $\alpha(h_\alpha)=2$,\newline in particular,  $h_i=h_{\alpha_i}$ for all $i=1,\ldots,\ell$,
\bullit $[x_\alpha,x_\beta]=N_{\alpha,\beta}x_{\alpha+\beta}$ if $\alpha+\beta\in\Phi$, where $N_{\alpha,\beta}\ne 0$ is an integer with $N_{\alpha,\beta}=-N_{-\alpha,-\beta}$,
\bullit $[x_\alpha,x_\beta]=0$ if $\alpha+\beta\notin\Phi$ and $\alpha\ne-\beta$.
\end{items}
\end{definition}

Note that we see the roots in $\Phi$  as elements of the dual space $(\h^c)^*$ via  $[h,x_\alpha] = \alpha(h) x_\alpha$.  For two roots $\alpha,\beta\in \Phi$, the corresponding Cartan integer now is  $\langle \alpha,\beta\rangle=\alpha(h_\beta)$; the Cartan matrix of $\Phi$  defined by $\Delta$ is $(\langle\alpha_i,\alpha_j\rangle)_{ij}$, see \cite[pp.\ 39 \& 55]{humph}. In the sequel, we usually denote by $\{h_1,\ldots,h_\ell,x_\alpha\mid \alpha\in\Phi\}$ a fixed Chevalley basis of $\g^c$, and by $\{h_i,x_i,y_i\mid i=1,\ldots,\ell\}$ with $x_i=x_{\alpha_i}$ and $y_i=x_{-\alpha_i}$ the canonical generating set it contains:

\begin{definition}
A generating set $\{c_i,a_i,b_i\mid i=1,\ldots,\ell\}$ of $\g^c$ is a \emph{canonical generating set} if for all $i,j\in\{1,\ldots,\ell\}$ the following hold:
\begin{items}
\bullit $c_i\in\h^c$, $a_i\in\g_{\alpha_i}$, and $b_i\in\g_{-\alpha_i}$,
\bullit $[c_i,c_j]=0$ and $[a_i,b_j]=\delta_{ij}c_i$, where $\delta_{ij}$ is the Kronecker delta,
\bullit $[c_i,a_j]=\langle \alpha_j,\alpha_i\rangle a_j$ and $[c_i,b_j]=-\langle \alpha_j,\alpha_i\rangle b_j$.
\end{items}
\end{definition}

Let $\{c_i',a_i',b_i'\mid i=1,\ldots,\ell\}$ be a second canonical generating set of $\g^c$, possibly relative to a different basis of simple roots $\Delta'$. If $\Delta$ and $\Delta'$ define the same Cartan matrix, then 
there exists a unique automorphism of $\g^c$ which maps $(c_i,a_i,b_i)$ to 
$(c_i',a_i',b_i')$ for every $i=1,\ldots,\ell$, see \cite[Chapter IV, Theorem 3]{jac}. We freely use this property throughout the paper. Also, if $\Phi$ and $\ell$ follow from the context, then we  write $\{h_i,x_\alpha\mid \alpha,i\}$ and $\{h_i,x_i,y_i\mid i\}$ for the Chevalley basis and canonical generating set.  We end this section with a  proposition, which yields a 
straightforward algorithm to obtain a canonical generating set. For its proof, as well as
the algorithm, we refer to \cite[Section 5.11]{deGraafBook}.

\begin{proposition}\label{prop:cangens}
For $i=1,\ldots,\ell$ let $a_i\in \g_{\alpha_i}$ and $b_i\in \g_{-\alpha_i}$, and write $c_i=[a_i,b_i]$. If  $[c_i,a_i] = 2a_i$ for all $i$, then  $\{c_i,a_i,b_i\mid i\}$ is a canonical generating set of $\g^c$.
\end{proposition}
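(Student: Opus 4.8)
The plan is to verify directly that the triple $\{c_i,a_i,b_i\mid i\}$ satisfies the three defining conditions of a canonical generating set, using the fact that the $a_i$ and $b_i$ are root vectors for $\alpha_i$ and $-\alpha_i$ and that $\h^c$ is abelian. The first condition, $c_i\in\h^c$, $a_i\in\g_{\alpha_i}$, $b_i\in\g_{-\alpha_i}$, is immediate: we are given $a_i\in\g_{\alpha_i}$ and $b_i\in\g_{-\alpha_i}$, and $c_i=[a_i,b_i]\in[\g_{\alpha_i},\g_{-\alpha_i}]\subseteq\h^c$ since $\alpha_i+(-\alpha_i)=0$ and the zero root space is exactly $\h^c$. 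Because $\h^c$ is a Cartan (hence abelian) subalgebra, $[c_i,c_j]=0$ follows at once.

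The heart of the argument is to identify $c_i$ with the coroot $h_{\alpha_i}=h_i$. We know $[\g_{\alpha_i},\g_{-\alpha_i}]$ is one-dimensional and spanned by $h_{\alpha_i}$, the unique element there with $\alpha_i(h_{\alpha_i})=2$; so $c_i=\lambda_i h_{\alpha_i}$ for some scalar $\lambda_i$ (a priori $\lambda_i$ could be zero, but the hypothesis will exclude that). The condition $[c_i,a_i]=2a_i$ says $\alpha_i(c_i)a_i = 2a_i$, and since $a_i\ne 0$ this forces $\alpha_i(c_i)=2$; combined with $\alpha_i(c_i)=\lambda_i\,\alpha_i(h_{\alpha_i})=2\lambda_i$ we get $\lambda_i=1$, i.e. $c_i=h_{\alpha_i}=h_i$. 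Once this identification is in hand, the remaining relations are read off the standard action of the Cartan subalgebra on root spaces: for any $x\in\g_{\alpha_j}$ we have $[h_i,x]=\alpha_j(h_{\alpha_i})x=\langle\alpha_j,\alpha_i\rangle x$, hence $[c_i,a_j]=\langle\alpha_j,\alpha_i\rangle a_j$, and likewise $[c_i,b_j]=-\langle\alpha_j,\alpha_i\rangle b_j$ using $(-\alpha_j)(h_{\alpha_i})=-\langle\alpha_j,\alpha_i\rangle$.

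It remains to check the off-diagonal bracket $[a_i,b_j]=\delta_{ij}c_i$ for $i\ne j$. Here $[a_i,b_j]\in[\g_{\alpha_i},\g_{-\alpha_j}]$, which lies in $\g_{\alpha_i-\alpha_j}$; but $\alpha_i-\alpha_j$ is a difference of distinct simple roots, hence is not a root and is not $0$, so $\g_{\alpha_i-\alpha_j}=0$ and $[a_i,b_j]=0$. The diagonal case $[a_i,b_i]=c_i$ holds by the definition $c_i=[a_i,b_i]$. Finally, one should note that $\{c_i,a_i,b_i\mid i\}$ generates $\g^c$: this is the standard fact that a semisimple Lie algebra is generated by the coroots together with the simple root vectors and their negatives (equivalently, the identification $c_i=h_i$, $a_i\in\C x_i$, $b_i\in\C y_i$ shows this set spans the same subalgebra as a Chevalley generating set). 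The only mild subtlety — the main thing to be careful about rather than a genuine obstacle — is the step that pins down $\lambda_i=1$, i.e. making sure the hypothesis $[c_i,a_i]=2a_i$ is used to rule out the degenerate possibility $c_i=0$; everything else is a direct application of the root space decomposition.
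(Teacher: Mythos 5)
Your verification is correct and is essentially the standard argument: the paper itself does not reproduce a proof of Proposition~\ref{prop:cangens} but defers to \cite[Section 5.11]{deGraafBook}, and your steps (placing $c_i$ in $\h^c$, using $[c_i,a_i]=2a_i$ to pin down $c_i=h_{\alpha_i}$, reading off the Cartan action on root spaces, killing $[a_i,b_j]$ for $i\neq j$ because $\alpha_i-\alpha_j$ is not a root, and invoking generation by simple root vectors) are exactly what that reference does. The only implicit point worth making explicit is that $a_i\neq 0$ (so that $[c_i,a_i]=2a_i$ really forces $\alpha_i(c_i)=2$) and then $b_i\neq 0$ since $[a_i,b_i]=h_i\neq 0$; with the tacit reading that the $a_i,b_i$ are nonzero root vectors, your argument is complete.
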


\subsection{Real forms}\label{secRF}
We now turn to the construction of the real forms of a complex semisimple Lie algebra $\g^c$; without loss of generality, we may assume that $\g^c$ is simple. We continue to use the notation of Section \ref{sec:cons}, that is, $\h^c$ is a Cartan subalgebra of $\g^c$ with root system $\Phi$, 
having basis of simple roots $\Delta=\{\alpha_1,\ldots,\alpha_\ell\}$. Let  $\{h_i,x_\alpha\mid i,\alpha\}$ and $\{h_i,x_i,y_i\mid i\}$ be a corresponding Chevalley basis and canonical generating set.  Recall that a real Lie algebra $\g'$ is a real form of $\g^c$ if $\g^c=\g'\oplus\imath\g'$ as real Lie algebras. \medskip

\subsubsection{Real forms defined by involutions} It is proved in \cite[Theorem 3.1]{onish} that the real subalgebra 
$\uu$ of $\g^c$ defined as \[\uu=\Span_\R(\{ \imath h_1,\ldots,\imath h_\ell, x_\alpha-x_{-\alpha}, 
\imath(x_\alpha+x_{-\alpha}) \mid \alpha\in\Phi^+\})\] is a (compact) real form of $\g^c$. Let $\tau$ be the corresponding real structure, that is, $\tau\colon \g^c\to \g^c$ is the complex conjugation
of $\g^c=\uu\oplus \imath \uu$ with respect to $\uu$. This implies that $\tau(x_\alpha)=-x_{-\alpha}$ for all $\alpha\in\Phi$, in particular, for all $i=1,\ldots,\ell$ we have \[\tau(h_i) = -h_i,\; \tau(x_i) = -y_i,\textrm{ and }\tau(y_i)=-x_i.\] 
It follows from  
\cite[Theorem 3.2]{onish} that, up to isomorphism, every real form of $\g^c$ is constructed as follows:
Let $\theta$ be an involutive automorphism of $\g^c$ commuting with $\tau$. Then  $\uu = \uu_0 \oplus \uu_1$, where $\uu_i$ is the eigenspace of $\theta$ in $\uu$ with eigenvalue $(-1)^i$, and the real form defined by $\uu$ and $\theta$ is \[\g=\g(\theta,\uu) = \fk\oplus \fp\quad\text{with}\quad \fk=\uu_0\text{ and }\fp=\imath\uu_1.\]
This decomposition of $\g$ is a Cartan decomposition whose Cartan involution is the restriction  of $\theta$ to $\g$, cf.\ \cite[\S 5]{onish}. We denote by $\sigma \colon \g^c \to \g^c$ the complex conjugation of $\g^c=\g\oplus \imath\g$ relative to $\g$. 

Two such real forms $\g(\theta,\uu)$ and $\g(\theta',\uu)$ are isomorphic if and only if
$\theta$ and  $\theta'$ are conjugate in $\Aut(\g^c)$. The finite order automorphisms of $\g^c$ are, up to conjugacy, classified by 
so-called Kac diagrams, see \cite[Section 3.3.7]{gorb} or \cite[\S X.5]{helgason}. By running through these diagrams
we can efficiently construct all involutions of $\g^c$ up to conjugacy, and hence all real forms of $\g^c$ up to isomorphism.\medskip

\subsubsection{Real forms of inner type}\label{secRFinn}
Let $\theta$ be an inner involutive automorphism of $\g^c$. Up to conjugacy, $\theta$ maps $(h_i,x_i,y_i)$ to  $(h_i,\lambda_i x_i,\lambda_i^{-1}y_i)$ with $\lambda_i\in\{\pm 1\}$ for all $i$. Clearly, such an automorphism  commutes with $\tau$, and bases of $\fk$ and $\fp$ in $\g=\g(\theta,\uu)=\fk\oplus\fp$ are 
\begin{eqnarray*}
\mathcal{K}&=&\{x_\alpha-x_{-\alpha}, \imath(x_\alpha+x_{-\alpha})\mid \alpha\in\Phi^+
\textrm{ with }\theta(x_\alpha)=x_{\alpha}\}\cup \{\imath h_1,\ldots,\imath h_\ell\},\\
\mathcal{P}&=& \{\imath(x_\alpha-x_{-\alpha}), x_\alpha+x_{-\alpha}\mid \alpha\in\Phi^+
\textrm{ with }\theta(x_\alpha)=-x_{\alpha}\}.
\end{eqnarray*}
 We define $\g$ by the multiplication table constructed via the basis $\mathcal{B} = \mathcal{K}\cup\mathcal{P}$. We note that $\{\imath h_1,\ldots,\imath h_\ell\}$ spans a Cartan subalgebra $\h_0$ of $\fk$, which is also a Cartan subalgebra of $\g$.  It is straightforward to see that $\sigma(x_\alpha) = -x_{-\alpha}$ if $\theta(x_\alpha) = x_\alpha$,
and $\sigma(x_\alpha) = x_{-\alpha}$ if $\theta(x_\alpha) = -x_\alpha$.\medskip

\subsubsection{Real forms of outer type}\label{secRFout}
Let $\theta$ be an outer involutive automorphism of $\g^c$. Up to conjugacy, $\theta = \varphi \circ \chi$,
where $\varphi$ is an involutive diagram automorphism and $\chi$ is an inner involutive automorphism; clearly, $\chi$ and $\varphi$ commute. As above, we can assume that  $\chi$ maps $(h_i,x_i,y_i)$ to $(h_i,\lambda_i x_i,
\lambda_i^{-1}y_i)$ with  $\lambda_i\in\{\pm 1\}$ for all $i$. Further, $\varphi$ maps  $(h_i,x_i,y_i)$ to $(h_{\pi(i)},x_{\pi(i)}, y_{\pi(i)})$ for all $i$, where $\pi$ is an involutive permutation of 
 $\{1,\ldots,\ell\}$ with $(\langle \alpha_j,\alpha_i\rangle)_{ij} = (\langle \alpha_{\pi(j)},\alpha_{\pi(i)}\rangle)_{ij}$; note that  $\{h_{\pi(i)},x_{\pi(i)},y_{\pi(i)}\mid i\}$ also is a canonical generating set, and $\lambda_{\pi(i)} = \lambda_i$ since  $\chi$ and $\varphi$ commute. The permutation $\pi$  induces an automorphism of $\Phi$, which we also denote by $\varphi$; that is, $\varphi(\alpha_i) = \alpha_{\pi(i)}$. 

Let $\g=\g(\theta,\uu)=\fk\oplus\fp$. We now determine bases $\mathcal{K}$ and $\mathcal{P}$ for $\fk$ and $\fp$, and, as before, define $\g$ by the multiplication table constructed via $\mathcal{B} = \mathcal{K}\cup\mathcal{P}$.  Since $\g^c$ admits outer automorphisms, it is of type $A$, $D$, or $E_6$, in particular, simply laced, cf.\ \cite[Table 1]{onish}. We first consider the case where $\g^c$ is not of type $A_{\ell}$ with $\ell$ even. In this case there exists a Chevalley basis 
$\{h_i,\hat x_\alpha\mid i,\alpha\}$ such that, when defining 
$\widehat{N}_{\alpha,\beta}$ by $[\hat x_\alpha, \hat x_\beta ] = \widehat{N}_{\alpha,\beta} \hat 
x_{\alpha+\beta}$, we have $\widehat{N}_{\varphi(\alpha),\varphi(\beta)}=\widehat{N}_{\alpha,\beta}$ for
all $\alpha,\beta\in \Phi$, see \cite[\S 7.9]{kac} or \cite[Section 5.15]{deGraafBook}. (This result does not hold if $\g^c$ is of type $A_\ell$ with $\ell$ even; we consider this case in the following section.)
Induction on the height of $\alpha$ now proves that $\varphi(\hat x_\alpha)= \hat x_{\varphi(\alpha)}$ 
for all $\alpha\in\Phi$. Thus, if $\varphi(\alpha)=\alpha$, then $\varphi$ acts as the identity on $\g_\alpha$, which implies that  $\varphi(x_\alpha) = x_\alpha$. 

For $\alpha\in\Phi$ define  \[ v_\alpha = x_\alpha-\varphi(x_{\alpha})\quad\textrm{and}\quad u_\alpha=\begin{cases} x_\alpha& \textrm{if }\varphi(\alpha)=\alpha,\\ x_\alpha+ \varphi(x_{\alpha})& \textrm{if }\varphi(\alpha)\ne \alpha.\end{cases}\] Let $\Psi^+$ be the set consisting of all $\alpha\in \Phi^+$ such that $\varphi(\alpha)=\alpha$, along
with one element of each pair $(\alpha,\varphi(\alpha))$ where $\varphi(\alpha)\neq \alpha$. Let $\mathcal{I}\subseteq \{1,\ldots,\ell\}$ be a set of representatives of the $\pi$-orbits on 
$\{1,\ldots,\ell\}$ of length 2. Now we define $\mathcal{K}$ as the union of the three sets 
\begin{eqnarray*}
&&\mathcal{H}_0 = \{ \imath h_i \mid i=1,\ldots,\ell\text{ with } \pi(i) = i \} \cup \{ \imath (h_i+h_{\pi(i)})\mid i\in\mathcal{I}\},\\
&&\{ u_\alpha -u_{-\alpha}, \imath(u_\alpha +u_{-\alpha}) \mid \alpha\in\Psi^+\text{ with } \chi(x_\alpha) = x_\alpha \},\textrm{ and }\\
&&\{ v_\alpha -v_{-\alpha}, \imath(v_\alpha +v_{-\alpha}) \mid \alpha\in\Psi^+\text{ with } \chi(x_\alpha) = -x_\alpha \text{ and } \varphi(\alpha)\neq \alpha \};
\end{eqnarray*}
note that if $\varphi(\alpha) = \alpha$ and $\chi(x_\alpha) = x_\alpha$, then $\theta(x_\alpha)
= x_\alpha$, whence $u_\alpha -u_{-\alpha}, \imath(u_\alpha +u_{-\alpha})\in \fk$. 
We define $\mathcal{P}$ to be the union of 
\begin{eqnarray*}
&&\{ h_i-h_{\pi(i)}\mid i\in\mathcal{I}\},\\
&&\{ \imath(u_\alpha -u_{-\alpha}), u_\alpha +u_{-\alpha} \mid \alpha\in\Psi^+\text{ with }  \chi(x_\alpha) = -x_\alpha \}, \textrm{ and }\\
&&\{ \imath(v_\alpha -v_{-\alpha}), v_\alpha +v_{-\alpha} \mid \alpha\in \Psi^+\text{ with } \chi(x_\alpha) = x_\alpha \text{ and }
\varphi(\alpha)\neq \alpha\}.
\end{eqnarray*}
It is straightforward to verify that  $\mathcal{K}$ and $\mathcal{P}$ are bases of $\fk$ and $\fp$.
Further, $\mathcal{H}_0$ spans a Cartan subalgebra  $\h_0$ of 
$\fk$, but this time the complexification $\h_0^c$ is not a Cartan subalgebra of $\g^c$. We have
$\sigma(u_\alpha) = -u_{-\alpha}$ and  $\sigma(v_\alpha) = v_{-\alpha}$ if $\chi(x_\alpha) = x_{\alpha}$, and 
$\sigma(u_\alpha) = u_{-\alpha}$ and $\sigma(v_\alpha) = -v_{-\alpha}$ otherwise.

\begin{remark}
We  consider the weight space decomposition of $\g^c$ with respect to $\h_0^c$ and show that each weight space in $\fk^c$ and $\fp^c$ (corresponding to a non-zero weight) is 1-dimensional. Note that  $\varphi$ fixes $\h_0^c$ pointwise and, if $h\in\h_0^c$, then $\alpha_i(h)\varphi(x_i)=\varphi([h,x_i])=[h,\varphi(x_i)]=\varphi(\alpha_i)(h)\varphi(x_i)$ for all $i$, implying that $\alpha(h)=\varphi(\alpha)(h)$ for all $\alpha\in\Phi$. Now write $\Psi=\Psi^+\cup (-\Psi^+)$ and define $\Psi_0=\{\alpha|_{\h_0^c}\mid \alpha\in\Psi\}$ as a subset of $(\h_0^c)^\star$. Consider the simple Lie algebra $\mathfrak{l} = \{ x\in \g^c \mid \varphi(x) = x \}$, see  \cite[Section 7.9]{kac}. It is easy to
verify that for all $\alpha\in\Psi$ we have $u_\alpha\in\mathfrak{l}$,
and, further, if $h\in\h_0^c$, then $[h,u_\alpha]=\alpha(h)u_\alpha$.
Since $\mathfrak{l}$ is simple, this proves that the root space
decomposition of $\mathfrak{l}$ with respect to $\h_0^c$ is $\mathfrak{l} = 
\h_0^c \oplus \bigoplus_{\alpha\in \Psi} \mathfrak{l}_\alpha$,
where $\mathfrak{l}_\alpha$ is spanned by $u_\alpha$; in particular, $| \Psi|=|\Psi_0|$.
So we have the $\h_0^c$-weight space decompositions
\begin{eqnarray*}\fk^c& =& \h_0^c \oplus \bigoplus\nolimits_{\alpha\in \Psi_0} \fk^c_\alpha\quad\textrm{and}\\
\fp^c &=& \Span_\C(\{h_i-h_{\pi(i)}\mid i=1,\ldots,\ell\text{ with }\pi(i)\neq i\})  \oplus \bigoplus\nolimits_{\alpha\in \Psi_0} \fp^c_\alpha,
\end{eqnarray*}
where each $\fk^c_\alpha = \{ x \in \fk^c \mid \forall h\in \h_0^c:[h,x] = \alpha(h) x\}$ (and similarly $\fp^c_\alpha$) has dimension at most one. More precisely, if $\alpha\in\Phi$ and $\bar\alpha=\alpha|_{\h_0^c}$, then the following hold:
\begin{items}
\bullit if  $\varphi(\alpha)\neq \alpha$ and $\chi(x_\alpha)=x_\alpha$, then  $\fk^c_{\bar\alpha}=\Span_\C(u_\alpha) $ and $\fp^c_{\bar\alpha}=\Span_\C(v_\alpha)$,
\bullit if  $\varphi(\alpha)\neq \alpha$ and $\chi(x_\alpha)\ne x_\alpha$, then  $\fk^c_{\bar\alpha}=\Span_\C(v_\alpha)$ and $\fp^c_{\bar\alpha}=\Span_\C(u_\alpha) $,
\bullit if  $\varphi(\alpha)=\alpha$ and $\chi(x_\alpha)=x_\alpha$, then  $\fk^c_{\bar\alpha}=\Span_\C(u_\alpha)$ and $\fp^c_{\bar\alpha}=0$,
\bullit if  $\varphi(\alpha)= \alpha$ and $\chi(x_\alpha)\ne x_\alpha$, then  $\fk^c_{\bar\alpha}=0$ and $\fp^c_{\bar\alpha}=\Span_\C(u_\alpha) $.
\end{items}
\end{remark}

\medskip

\subsubsection{Real forms of $A_{\ell}$, $\ell$ even, of outer type}\label{secRFout2}
It remains to consider the case where $\g^c$ is of type $A_{\ell}$ with $\ell=2m$ even; we use the notation of the previous section. Up to conjugacy, we can assume that $\chi$ is the identity, thus $\theta=\varphi$ is the unique diagram automorphism.  (This follows directly from looking at
the possible Kac diagrams of an outer involution in this case.)
Since $\g^c$ is simply laced,  $N_{\alpha,\beta}=\pm 1$ for all $\alpha,\beta\in\Phi$ with $\alpha+\beta\in\Phi$, and  induction on the height of $\alpha$ proves that  $\varphi(x_\alpha)=\pm x_{\varphi(\alpha)}$ for all $\alpha\in\Phi$. By \cite[\S 7.10]{kac}, there is a Chevalley basis of $\g^c$ such that $\varphi(x_\alpha) = -x_\alpha$ for all $\alpha\in\Phi$ with $\varphi(\alpha)
= \alpha$. Let $\g=\g(\theta,\uu)=\fk\oplus\fp$. A basis of $\fk$ is the set $\mathcal{K}$ defined as the union of 
\[\mathcal{H}_0 = \{ \imath (h_i+h_{\pi(i)})\mid i\in\mathcal{I}\}\quad\textrm{and}\quad\{ u_\alpha -u_{-\alpha}, \imath(u_\alpha +u_{-\alpha}) \mid \alpha\in\Psi^+ \text{ with } 
\varphi(\alpha)\neq \alpha \};\]
note that $|\mathcal{I}|=m$ since $\pi$ acts fixed-point freely on $\{1,\ldots, 2m\}$. A basis  $\mathcal{P}$  of $\fp$ is the union of 
\begin{eqnarray*}
&&\{ h_i-h_{\pi(i)}\mid i\in\mathcal{I}\},\\
&&\{ \imath(u_\alpha -u_{-\alpha}), u_\alpha +u_{-\alpha} \mid \alpha\in\Psi^+\text{ with }  \varphi(\alpha) 
= \alpha \}, \textrm{ and }\\
&&\{ \imath(v_\alpha -v_{-\alpha}), v_\alpha +v_{-\alpha} \mid \alpha\in \Psi^+\text{ with } 
\varphi(\alpha)\neq \alpha\}.
\end{eqnarray*}
Again, $\mathcal{H}_0$ spans a Cartan subalgebra  $\h_0$ of 
$\fk$, and $\h_0^c$ is not a Cartan subalgebra of $\g^c$. We obtain weight space decompositions of $\fk^c$ and $\fp^c$ as in Section \ref{secRFout}. All non-zero weight spaces with respect to $\h_0^c$ are 1-dimensional and spanned by an $u_\alpha$ or $v_\alpha$. Again, 
$\sigma(u_\alpha) = u_{-\alpha}$ and  $\sigma(v_\alpha) = v_{-\alpha}$.

\section{Kostant-Sekiguchi correspondence}\label{secKS}

\noindent Let $\g^c$ be a complex semisimple Lie algebra with real form $\g=\fk\oplus\fp$,  associated complex conjugation $\sigma$, and Cartan involution $\theta$. Recall that we denote by $G$ and $K^c$ the connected Lie subgroups of the adjoint group $G^c$ of $\g^c$ with Lie algebras $\g$ and $\fk^c$, respectively. The Kostant-Sekiguchi correspondence is a one-to-one correspondence between the nilpotent 
$G$-orbits in $\g$ and the nilpotent $K^c$-orbits  in $\fp^c$. The latter orbits can be 
constructed using the algorithms in \cite{sla,deGraafTheta}; note that $K^c$, together with 
its action on $\fp^c$, is a so-called $\theta$-group. An implementation of the Kostant-Sekiguchi 
correspondence would therefore allow us to construct the nilpotent $G$-orbits in $\g$. 

We now describe this correspondence in more detail. Its proof has been completed independently 
by \DJ okovic \cite{djok87} and Sekiguchi \cite{seki87}; here we follow the description of 
\cite{djok87} and refer to that paper  for an historical account and (references to) proofs. 
First, we need some notation. The following definitions are as in \cite{djok87} with the 
exception that our ``$f$'' has been replaced by ``$-f$''. An \emph{$\SL_2$-triple} in $\g$ (or $\g^c$) is a 
triple $(f,h,e)$ of elements in $\g$ (or $\g^c$) such that $[h,e]=2e$, $[h,f]=-2f$, and $[e,f]=h$. The 
\emph{characteristic} (element) of this triple is $h$.

\begin{definition}\label{defTriples} 
\begin{iprf}
\item An $\SL_2$-triple $(f,h,e)$ in $\g^c$ is \emph{homogeneous} if $e,f\in\fp^c$ and 
$h\in\fk^c$.
\item An $\SL_2$-triple $(f,h,e)$ in $\g^c$ is a \emph{complex Cayley triple} if it is 
homogeneous and $\sigma(e)=f$.
\item An $\SL_2$-triple $(f,h,e)$ in $\g$ is a \emph{real Cayley triple} if $\theta(e)=-f$.
\end{iprf}
\end{definition}

The Kostant-Sekiguchi correspondence can now be stated as in Figure \ref{figKS}, where all maps 
are bijections. We provide some details. Every non-zero nilpotent $e\in\fp^c$ lies in some 
homogeneous $\SL_2$-triple $(f,h,e)$ of $\g^c$, and the projection $(f,h,e)\mapsto e$ induces a 
bijection between the $K^c$-orbits of homogeneous $\SL_2$-triples in $\g^c$ and the $K^c$-orbits 
of non-zero nilpotent elements in $\fp^c$; let $\varphi_1$ denote the inverse of this bijection. 
Every $K^c$-orbit of homogeneous $\SL_2$-triples in $\g^c$ contains a complex Cayley triple and, 
conversely, every $K$-orbit of complex Cayley triples in $\g^c$ is contained in a unique 
$K^c$-orbit of homogeneous $\SL_2$-triples in $\g^c$. Thus, inclusion gives rise to a bijection 
between the $K$-orbits of complex Cayley triples and the $K^c$-orbits of homogeneous 
$\SL_2$-triples in $\g^c$; again, let $\varphi_2$ denote the inverse of this bijection.  Let $(f,h,e)$ be a real Cayley triple. Then its {\em Cayley transform} is the triple
$$(\tfrac{1}{2}(\imath e+\imath f+h),\imath (e-f), \tfrac{1}{2}(-\imath e-\imath f+ h))$$
which is a complex Cayley triple. The inverse Cayley transform maps a complex Cayley
triple $(f,h,e)$ to the real Cayley triple
$$(\tfrac{1}{2}\imath(e-f+ h),e+f, \tfrac{1}{2}\imath(e-f- h)).$$ Taking inverse Cayley transforms induces a bijection $\varphi_3$ between the $K$-orbits of complex 
Cayley triples in $\g^c$ and the $K$-orbits of real Cayley triples in $\g$. The 
projection $(f,h,e)\mapsto e$ yields a bijection $\varphi_4$ between these $K$-orbits of real 
Cayley triples  and the $G$-orbits of non-zero nilpotent elements in $\g$. In conclusion, the Kostant-Sekiguchi correspondence states that $\varphi_4\circ\varphi_3\circ\varphi_2\circ\varphi_1$ is a bijection.

\begin{figure}[htbp]
\begin{center} 
\scalebox{0.8}{
\begin{tikzpicture}[>=latex]

\tikzstyle{edge} = [black,thick,|->];
\tikzstyle{edge2} = [black,thick,->];
\tikzstyle{edgeLabel} = [pos=0.5,text width=23ex, xshift=15ex];
\tikzstyle{edgeLabel2} = [pos=0.5,text width=20ex, xshift=12ex];
\tikzstyle{block} = [rectangle, draw,
          text width=25ex, text centered, rounded corners, minimum height=2em]   
\tikzstyle{form} = [rectangle, text width=10ex, text centered, minimum height=2em]      

\node[block](A){non-zero nilpotent\\ $G$-orbits in $\g$};
\node[form, right=of A](AA){$e^G$};
\node[block,below=of A](B){$K$-orbits of real Cayley triples in $\g$};
\node[form, right=of B](BB){$(f,h,e)^K$};
\node[block, below=of B](C){$K$-orbits of complex Cayley triples in $\g$};
\node[form,right=of C](CC){$(f',h',e')^K$};
\node[block, below=of C](D){$K^c$-orbits of homogeneous $\SL_2$-triples in $\g^c$};
\node[form,right=of D](DD){$(f',h',e')^{K^c}$};
\node[block,below=of D](E){non-zero nilpotent $K^c$-orbits in $\fp^c$};
\node[form,right=of E](EE){$e'^{K^c}$};
\draw ($(BB.north)$) edge[edge] node[edgeLabel]{projection} ($(AA.south)$);
\draw ($(CC.north)$) edge[edge] node[edgeLabel]{inverse Cayley transform}  ($(BB.south)$);
\draw ($(CC.south)$) edge[edge] node[edgeLabel]{inclusion} ($(DD.north)$);
\draw ($(DD.south)$) edge[edge] node[edgeLabel]{projection}  ($(EE.north)$);
\draw ($(E.north)$) edge[edge2] node[edgeLabel2]{$\varphi_1$} ($(D.south)$);
\draw ($(D.north)$) edge[edge2] node[edgeLabel2]{$\varphi_2$} ($(C.south)$);
\draw ($(C.north)$) edge[edge2] node[edgeLabel2]{$\varphi_3$} ($(B.south)$);
\draw ($(B.north)$) edge[edge2] node[edgeLabel2]{$\varphi_4$} ($(A.south)$);
\end{tikzpicture}
}\caption{\label{figKS}
Kostant-Sekiguchi correspondence.}
\end{center}
\end{figure}
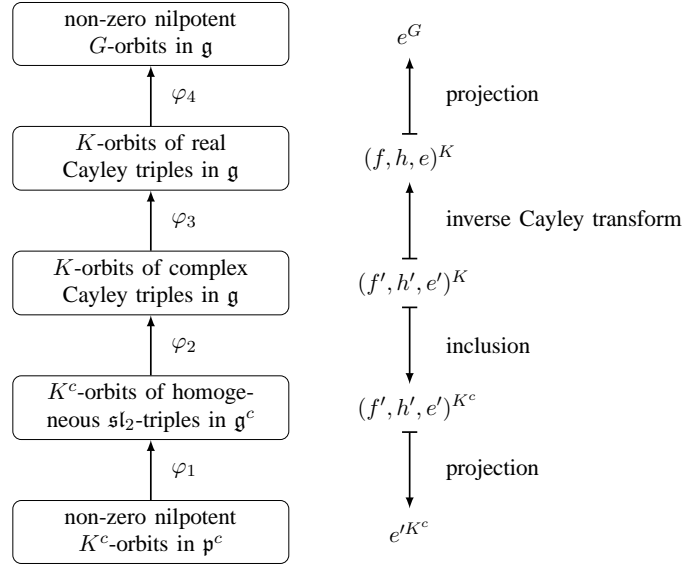

Using the algorithms of  \cite{sla,deGraafTheta}, we can compute all $K^c$-orbits of homogeneous 
$\SL_2$-triples in $\g^c$, which also gives us the bijection $\varphi_1$. A realisation  of the map 
$\varphi_4\circ\varphi_3$ is straightforward. Thus, computationally,  it remains to realise $\varphi_2$, that is:
\medskip

\noindent {\bf Main Problem.} {\it Find a complex Cayley triple in a $K^c$-orbit of homogeneous 
$\SL_2$-triples.}\medskip

We discuss our approach to this problem in Section \ref{secGetCT}. For this purpose, we require some preliminary results;  the subsequent sections therefore introduce carrier algebras and Chevalley systems.

\section{Carrier algebras}\label{sec:carralg}

\noindent We briefly review the theory of carrier algebras as developed by Vinberg \cite{vin87}. In general, carrier algebras are connected to $\Z_m$-graded Lie algebras. Since we exclusively
deal with $\Z_2$-gradings (coming from Cartan decompositions), we only consider this case here.

Let $\g = \fk\oplus \fp$ be as in Section \ref{secRF}, and consider the $\Z_2$-grading
$\g^c = \g_0^c\oplus \g_1^c$, where $\g^c_0 = \fk^c$ and $\g^c_1 = \fp^c$. Recall that  $G_0 = K^c$ is
the connected Lie subgroup of $G^c$ with Lie algebra $\g^c_0$. Let $e\in \g^c_1$ be
nilpotent, and consider the normaliser $N_0(e) = \{ x \in \g^c_0 \mid \exists\lambda\in \C\colon [x,e] = \lambda e\}$. Let $\ft$ be a maximal torus of $N_0(e)$, that is, a maximal abelian subalgebra consisting of semisimple elements, and let $\mu\in \ft^*$ be defined by $[t,e] = \mu(t)e$ for 
$t\in \ft$. Let $\fa^c=\bigoplus_{k\in \Z} \fa_k$ be the $\Z$-graded Lie algebra defined by 
\[\fa_k = \{ x \in \g^c_{k \bmod 2} \mid \forall t\in\ft\colon [t,x] = k\mu(t) x \}.\]
The \emph{carrier algebra} of $e$ is the commutator algebra of $\fa^c$ with the inherited $\Z$-grading, that is, 
\[\fs^c = \fs(e,\ft)=\bigoplus\nolimits_{k\in\Z}\fs_k=[\fa^c,\fa^c].\]  As shown in \cite{vin87}, it has the following
properties:

\begin{items}
\bullit $\fs^c$ is semisimple with  $\dim \fs_0 = \dim \fs_1$,
\bullit $\fs^c$ is not a proper subalgebra of a $\Z$-graded semisimple subalgebra of $\g^c$
of the same rank,
\bullit $\fs_k\subseteq \fk^c$ if $k$ is even, and $\fs_k\subseteq \fp^c$ otherwise,
\bullit $\fs^c$  is normalised by a Cartan subalgebra of $\g^c_0$. 
\end{items}
Moreover, $e\in \fs_1$ is in \emph{general position}, that is, $[\fs_0,e]=\fs_1$; every element in $\fs_1$ in general position is $G_0$-conjugate to $e$. If $(f,h,e)$ is a homogeneous $\SL_2$-triple in $\fs^c$, that is, $h\in\fs_0$ and $f\in\fs_{-1}$, then
$h/2$ is the unique {\em defining element} of $\fs^c$, that is, for all $k$ \[\fs_k = \{ x \in \fs^c \mid  [ \tfrac{h}{2}, x] = kx \}.\]Since all maximal tori of $N_0(e)$ are conjugate, this yields a bijection between the nilpotent $G_0$-orbits in $\g_1^c$ and the $G_0$-conjugacy
classes of $\Z$-graded subalgebras with the above properties. This bijection can be
used for an algorithm to list the nilpotent $G_0$-orbits in $\g^c_1$, cf.\ \cite{deGraafTheta, littelmann}.

\begin{remark}\label{remCAT}
Suppose $(f,h,e)$ is a homogeneous $\SL_2$-triple in $\g^c$ and let $\fs^c=\fs^c(e,\ft)$ be a carrier algebra. Since $h\in N_0(e)$, we can choose a torus containing $h$, thus $h\in\fs^c$. By the Jacobson-Morozov Theorem, see \cite[Theorem X.10.3]{knapp}, there is $f'\in\fs^c$ such that $(f',h,e)$ is an $\SL_2$-triple in $\fs^c$, hence also in $\g^c$. The same theorem shows $f=f'$, thus we can assume that $\fs^c$ contains $f,h,e$. We also call such an $\fs^c$ a carrier algebra of the triple $(f,h,e)$; note that  $h/2$ is its defining element.
\end{remark}

Let $\h_0^c$ be a fixed Cartan subalgebra of $\g_0^c$. A carrier algebra $\fs^c$ is
 {\em standard} if it is normalised by $\h_0^c$, and $[\h_0^c,\fs_k]\subseteq \fs_k$ for all $k$. Since the Cartan subalgebras of
$\g_0^c$ are $G_0$-conjugate, every nilpotent $G_0$-orbit in $\g_1^c$ corresponds to at least
one standard carrier algebra $\fs^c$. Now, as shown in  \cite[p.\ 23]{vin87}, the defining element of $\fs^c$ lies in $\h_0^c$, and $\h_0^c\cap \fs_0$ is a Cartan subalgebra
of $\fs^c$; let $\Phi_{\fs^c}$ be the corresponding root system of $\fs^c$. Clearly, the homogeneous components $\fs_k$ are sums of root spaces, which allows us to define the degree of $\alpha\in\Phi_{\fs^c}$ as  $\deg(\alpha)=k$ if $\fs_\alpha \subseteq \fs_k$. If $\Delta_{\fs^c}$ is a basis of simple roots such that $\deg(\alpha) \geq 0$ for all $\alpha\in\Delta_{\fs^c}$, then in fact $\deg(\alpha)\in \{0,1\}$, see \cite[p.\ 29]{vin87}. If $\deg(\alpha)=1$ for all $\alpha\in\Delta_{\fs^c}$,
then $\fs^c$ is {\em principal}. In that case $\fs_0 = \fs_0 \cap
\h_0^c$ is a torus (in particular, abelian) and $\fs_1$ is spanned by $\fs_\alpha$ with $\alpha\in \Delta_{\fs^c}$.

\section{Chevalley systems}\label{secCS}
\noindent Again, we consider $\g=\fk\oplus \fp$ with Cartan involution $\theta$ and complexification $\g^c$ with complex conjugation $\sigma$. We suppose that $\h^c=\h\oplus\imath\h$ is a Cartan subalgebra of $\g^c$, where $\h$ is a Cartan subalgebra of $\g$ with $\h=(\h\cap\fk)\oplus(\h\cap\fp)$; write $\h_0=\h\cap\fk$ and $\fa=\h\cap\fp$.  With this property, $\h$ is called \emph{standard} and every adjoint $\ad(h)$ with $h\in \h_0$ or $h\in\fa$ has only purely imaginary respectively  only real eigenvalues, see \cite[p.\ 405]{rot72} or \cite[Proposition 5.1(ii)]{onish}.  (This condition on $\h$ is not a serious restriction since every Cartan subalgebra of $\g$ is conjugate to a standard Cartan subalgebra, see \cite[Proposition 1.3]{rot72}.) We let $\Phi$ be the root system of $\g^c$ with respect to 
$\h^c$, with basis of simple roots $\Delta=\{\alpha_1,\ldots,\alpha_\ell\}$.
Further we assume that we have a canonical generating set $\{h_i,x_i,y_i\mid i\}$ such that for every $i$ either $\theta(x_i)=\lambda_i x_i$, with $\lambda_i = \pm 1$, 
or $\theta(x_i)=x_j$ with $i\ne j$. 
We extend these canonical generators to a Chevalley basis $\{h_i, x_\alpha\mid i, \alpha\}$. If $\alpha\in \Phi$ is a root, then $\beta = \alpha\circ\theta$ is a root with $\theta(x_\alpha) \in \g_\beta$ and $\theta(h_\alpha)=h_{\beta}$; hence, by our assumptions, $\Delta$ is stable under $\alpha\mapsto\alpha\circ\theta$. Let $\pi$ be the permutation of $\{1,\ldots,\ell\}$ defined by $\alpha_i\circ\theta=\alpha_{\pi(i)}$. We retain this notation throughout this section.

\begin{lemma}\label{lemSigma} For every $\alpha\in\Phi$ the following hold.
\begin{ithm}
\item $\theta(x_\alpha)=\lambda_\alpha x_{\alpha\circ\theta}$ for some $\lambda_\alpha\in\{\pm1\}$, and $\lambda_\alpha=\lambda_\alpha^{-1}=\lambda_{-\alpha}=\lambda_{\alpha\circ\theta}$.
\item $\sigma(x_\alpha)=r_\alpha x_{-\alpha\circ\theta}$ for some  $r_\alpha\in \R$, and $r_\alpha^{-1}=r_{-\alpha}=r_{-\alpha\circ\theta}$.
\item $\theta(h_\alpha)=h_{\alpha\circ\theta}$ and $\sigma(h_\alpha)=h_{-\alpha\circ\theta}=-h_{\alpha\circ\theta}$.
\end{ithm}
\end{lemma}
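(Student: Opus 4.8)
The plan is to establish the three parts in order, exploiting the defining properties of the Chevalley basis together with the commutation relations $\theta\sigma = \sigma\theta = \tau$ (complex conjugation with respect to the compact form), or more directly the facts $\sigma = \theta\circ\tau$ and $\tau(x_\alpha) = -x_{-\alpha}$. First, for part (a): since $\theta$ is an automorphism of $\g^c$ and $\alpha\mapsto\alpha\circ\theta$ permutes $\Phi$ (indeed permutes $\Delta$ by our standing assumptions), $\theta$ maps the root space $\g_\alpha$ onto $\g_{\alpha\circ\theta}$, so $\theta(x_\alpha) = \lambda_\alpha x_{\alpha\circ\theta}$ for some nonzero scalar $\lambda_\alpha$. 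To see $\lambda_\alpha = \pm 1$ I would argue by induction on the height of $\alpha$: for simple roots this holds by the hypothesis on the canonical generating set; for the inductive step, if $\alpha = \beta + \gamma$ with $\beta,\gamma$ of smaller height, apply $\theta$ to $[x_\beta,x_\gamma] = N_{\beta,\gamma}x_\alpha$, use $N_{\theta\beta,\theta\gamma} = \pm N_{\beta,\gamma}$ (or directly that $\theta$ preserves the sign pattern because it comes from a diagram automorphism composed with a sign change on root vectors), and conclude $\lambda_\alpha = \pm 1$. Then $\theta^2 = \mathrm{id}$ forces $\lambda_\alpha\lambda_{\alpha\circ\theta} = 1$, i.e.\ $\lambda_\alpha = \lambda_{\alpha\circ\theta}$ (since both are $\pm 1$); applying $\theta$ to $[x_\alpha,x_{-\alpha}] = h_\alpha$ and using part (c) gives $\lambda_\alpha\lambda_{-\alpha} = 1$, hence $\lambda_\alpha = \lambda_{-\alpha}$; and $\lambda_\alpha = \lambda_\alpha^{-1}$ is immediate from $\lambda_\alpha\in\{\pm1\}$.

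For part (c): $\theta(h_\alpha) = h_{\alpha\circ\theta}$ is the statement already recorded in the text just before the lemma (it follows from $\theta(h_\alpha)\in[\g_{\alpha\circ\theta},\g_{-\alpha\circ\theta}]$ and $(\alpha\circ\theta)(\theta(h_\alpha)) = \alpha(h_\alpha) = 2$, which characterises $h_{\alpha\circ\theta}$). For $\sigma$, I would write $\sigma = \theta\circ\tau$ and use $\tau(h_\alpha) = -h_\alpha$ (which follows from $\tau(x_{\pm\alpha}) = -x_{\mp\alpha}$ and $\tau$ being an automorphism applied to $[x_\alpha,x_{-\alpha}] = h_\alpha$, or by induction from $\tau(h_i) = -h_i$). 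Then $\sigma(h_\alpha) = \theta(-h_\alpha) = -h_{\alpha\circ\theta}$. Finally $h_{-\beta} = -h_\beta$ for any root $\beta$ (from the defining property: $-h_\beta\in[\g_{-\beta},\g_\beta]$ with $(-\beta)(-h_\beta) = 2$), so $-h_{\alpha\circ\theta} = h_{-\alpha\circ\theta} = h_{(-\alpha)\circ\theta}$, giving all the claimed equalities.

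For part (b): again write $\sigma = \theta\circ\tau$, so $\sigma(x_\alpha) = \theta(\tau(x_\alpha)) = \theta(-x_{-\alpha}) = -\lambda_{-\alpha}x_{(-\alpha)\circ\theta} = -\lambda_\alpha x_{-\alpha\circ\theta}$ using part (a). This already shows $\sigma(x_\alpha) = r_\alpha x_{-\alpha\circ\theta}$ with $r_\alpha = -\lambda_\alpha \in\{\pm 1\}\subset\R$ — which is stronger than the stated $r_\alpha\in\R$, but I would phrase part (b) as stated (perhaps the authors prefer to keep $r_\alpha$ real because a different normalisation of the Chevalley basis is allowed elsewhere; in any case $r_\alpha\in\R$ follows). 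The relations $r_\alpha^{-1} = r_{-\alpha} = r_{-\alpha\circ\theta}$ then follow from the corresponding relations for $\lambda$ in part (a): $r_{-\alpha} = -\lambda_{-\alpha} = -\lambda_\alpha = r_\alpha = r_\alpha^{-1}$, and $r_{-\alpha\circ\theta} = -\lambda_{-\alpha\circ\theta} = -\lambda_\alpha = r_\alpha$.

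The main obstacle is the induction in part (a) establishing $\lambda_\alpha = \pm 1$: one must be careful that $\theta$, being in general $\varphi\circ\chi$ (diagram automorphism composed with an inner sign-twist), genuinely sends $x_\alpha$ to $\pm x_{\alpha\circ\theta}$ and not to an arbitrary scalar multiple. This is controlled by the existence of a Chevalley basis with $\widehat N_{\varphi(\alpha),\varphi(\beta)} = \widehat N_{\alpha,\beta}$ (invoked in Section~\ref{secRFout}), which makes the diagram-automorphism part act by $x_\alpha\mapsto x_{\varphi(\alpha)}$ on a suitable basis, and the sign-twist part acts diagonally by $\pm 1$ by construction; once on such a basis the induction on height closes cleanly. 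Everything else is a routine application of the Chevalley relations and $\theta^2 = \sigma^2 = \mathrm{id}$, $\sigma = \theta\tau$.
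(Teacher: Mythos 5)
Your part a), and the statement $\theta(h_\alpha)=h_{\alpha\circ\theta}$ in c), are argued exactly as in the paper and are fine. The gap is in part b) (and it propagates into your treatment of $\sigma(h_\alpha)$ in c)): you use the identity $\tau(x_\alpha)=-x_{-\alpha}$ for the compact conjugation $\tau$ with $\sigma=\theta\circ\tau$, and conclude $r_\alpha=-\lambda_\alpha\in\{\pm1\}$. That identity is not available in the setting of Lemma \ref{lemSigma}. It holds for the particular Chevalley basis used in Section \ref{secRF} to \emph{define} the compact form $\uu$ and hence $\sigma$, relative to the Cartan subalgebra used there; but the lemma is stated for an arbitrary standard Cartan subalgebra $\h$ of $\g$ (in the outer-type applications of Section \ref{secCSCT} it is a different, maximally compact one) and an arbitrary Chevalley basis whose only constraint is the behaviour of the canonical generators under $\theta$. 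For such a basis one only has $\tau(x_i)=r_i\lambda_i y_i$, which is precisely what the paper records in the proof of Lemma \ref{lemSCSex}. Concretely, replacing the basis by its image under the automorphism $(h_i,x_i,y_i)\mapsto(h_i,c_i x_i,c_i^{-1}y_i)$ with real $c_i=c_{\pi(i)}$ preserves every hypothesis of the lemma (the $\lambda_i$ are unchanged) but replaces $r_i$ by $c_i^2r_i$; so $r_\alpha$ is a general nonzero real, not a sign. Your stronger conclusion would also make the rest of the paper vacuous: Corollary \ref{corSign} asserts only equality of \emph{signs}, the automorphism $\eta$ with factors $|r_i|^{\pm 1/2}$ in the proof of Lemma \ref{lemSCSex} and the scalars $\mu_i=\sqrt{r_i/r_i'}$ in Section \ref{secIso} exist exactly because $r_i\ne\pm1$ in general. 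Likewise your justification of $\tau(h_\alpha)=-h_\alpha$ rests on the same unavailable identity (or on $\tau(h_i)=-h_i$, which again is only known for the Section \ref{secRF} basis), so as written c) for $\sigma$ is circular, although that particular fact is salvageable.

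What is actually needed — and what the paper does — is an argument that makes no reference to $\tau$ acting nicely on the basis. First one shows $\sigma(x_\alpha)\in\g_{-\alpha\circ\theta}$ directly: $\sigma$ is antilinear and fixes $\fa$ and $\h_0$ pointwise, roots take real values on $\fa$ and purely imaginary values on $\h_0$ (standardness of $\h$), so $[k,\sigma(x_\alpha)]=\alpha(k)\sigma(x_\alpha)$ for $k\in\fa$ and $[k,\sigma(x_\alpha)]=-\alpha(k)\sigma(x_\alpha)$ for $k\in\h_0$, i.e.\ $\sigma(x_\alpha)=r_\alpha x_{-\alpha\circ\theta}$ with a priori $r_\alpha\in\C$. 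Applying $\sigma$ to $[h_\alpha,x_\alpha]=2x_\alpha$ and to $h_\alpha=[x_\alpha,x_{-\alpha}]$ gives $\sigma(h_\alpha)=h_{-\alpha\circ\theta}=-h_{\alpha\circ\theta}$ and hence $r_\alpha r_{-\alpha}=1$; comparing $\theta\circ\sigma(x_\alpha)$ with $\sigma\circ\theta(x_\alpha)$ (using a) and $\sigma^2=\mathrm{id}$, which yields $r_{\alpha\circ\theta}=\overline{r_{-\alpha}}^{\,-1}$) gives $r_\alpha\overline{r_{-\alpha}}=1$. The two relations together force $r_\alpha\in\R$, and then $r_{-\alpha\circ\theta}=r_{-\alpha}=r_\alpha^{-1}$. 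This realness step is the essential content of part b), and it is the step your proposal is missing.
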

\begin{proof}
\begin{iprf}
\item We already know that $\lambda_{\alpha_i}=\lambda_i\in\{\pm 1\}$ and now use induction: If  $\lambda_\alpha,\lambda_\beta\in\{\pm 1\}$, then $N_{\alpha,\beta}\theta(x_{\alpha+\beta})=\lambda_\alpha\lambda_\beta N_{\alpha\circ\theta,\beta\circ\theta} x_{(\alpha+\beta)\circ\theta}$, hence $\lambda_{\alpha+\beta}= N_{\alpha,\beta}^{-1}N_{\alpha\circ\theta,\beta\circ\theta} \lambda_\alpha\lambda_\beta\in\{\pm1\}$ since $|N_{\alpha,\beta}|=|N_{\alpha\circ\theta,\beta\circ\theta}|$; the latter holds since $|N_{\alpha,\beta}|=r+1$ where $r$ is the largest integer with $\alpha-r\beta\in\Phi$, see \cite[Theorem 25.2]{humph}.
Since $\theta$ is an involution, $\lambda_{\alpha\circ\theta}=\lambda_{\alpha}^{-1}$, and $\lambda_{-\alpha}=\lambda_\alpha^{-1}$ follows from $h_{\alpha\circ\theta}=\theta(h_\alpha)=\theta([x_\alpha,x_{-\alpha}])=\lambda_\alpha\lambda_{-\alpha}h_{\alpha\circ\theta}$.

\item[b+c)] Let $\{k_1,\ldots,k_\ell\}$ be a basis of $\h^c=\h_0^c\oplus \fa^c$ such that 
$\{k_1,\ldots,k_m\}$ and $\{k_{m+1},\ldots,k_\ell\}$ form bases of $\fa$ and $\h_0$ respectively.
If $i\in\{1,\ldots,m\}$, then $[k_i,\sigma(x_\alpha)]=\sigma([k_i,x_\alpha]) = \sigma( \alpha(k_i)
x_\alpha) = \alpha(k_i)\sigma(x_\alpha)$ as $\alpha(k_i)$ is real. Analogously, if $i\in\{m+1,\ldots,\ell\}$, then
$\alpha(k_i)$ is purely imaginary and $[k_i,\sigma(x_\alpha)] = -\alpha(k_i)
\sigma(x_\alpha)$.
Hence $\sigma(x_\alpha)=r_\alpha x_{-\alpha\circ\theta}$ with $r_\alpha\in\C$. Note that  $h_{-\beta}=-h_\beta$ for all $\beta\in\Phi$. Now it follows from $[\sigma(h_\alpha),\sigma(x_\alpha)]=2\sigma(x_\alpha)$ that $-\alpha\circ\theta (\sigma(h_\alpha))=2$, 
hence $\sigma(h_\alpha) \in [ \g_{-\alpha\circ\theta},\g_{\alpha\circ\theta}]$ implies that  \[\sigma(h_\alpha) = h_{-\alpha\circ\theta}=- h_{\alpha\circ\theta}.\]
Since $\sigma(h_\alpha)=r_\alpha r_{-\alpha} [x_{-\alpha\circ\theta},x_{\alpha\circ\theta}]=-r_\alpha r_{-\alpha} h_{\alpha\circ\theta}$, this already proves that  $r_\alpha r_{-\alpha}=1$ for all $\alpha\in\Phi$. On the other hand, $r_{\alpha}\overline{r_{-\alpha}}=1$ (with $\overline{\cdot}$ denoting the complex conjugate in $\C$) follows from \[r_{\alpha}\lambda_{-\alpha\circ\theta}x_{-\alpha}=\theta(r_{\alpha}x_{-\alpha\circ\theta})=\theta\circ\sigma(x_\alpha)=\sigma\circ\theta(x_\alpha)=\lambda_\alpha \sigma(x_{\alpha\circ\theta})=\lambda_\alpha\overline{r_{-\alpha}}^{-1}x_{-\alpha};\]recall that $\sigma\circ\theta=\theta\circ\sigma$ and $\lambda_{-\alpha\circ\theta}=\lambda_\alpha$ by a). Together, we have $r_\alpha\in\R$ for all $\alpha\in\Phi$.  Since $\sigma$ has order two, $r_{-\alpha\circ\theta}=r_\alpha^{-1}=r_{-\alpha}$ for all $\alpha\in\Phi$.
\end{iprf}
\end{proof}

As for $\lambda_i=\lambda_{\alpha_i}$, we sometimes write $r_i=r_{\alpha_i}$. We now consider 
Chevalley systems as defined in  \cite[Chapter VIII, \S 3, Definition 3]{bourbaki}, 
see also \cite[Lemma 2]{djok87}.

\begin{definition} We use the previous notation. A \emph{Chevalley system} of $\g^c$ with respect to $\h^c$ is a family $(w_\alpha)_{\alpha\in\Phi}$ 
where $w_\alpha\in \g_\alpha$ with $[w_\alpha,w_{-\alpha}]= -h_\alpha$ for all
  $\alpha\in\Phi$ and such that  the linear map defined by $h\mapsto-h$ for
$h\in\h^c$ and $w_\alpha\mapsto w_{-\alpha}$ for
$\alpha\in\Phi$ is a Lie automorphism, called  \emph{Chevalley automorphism}.  If  $\theta(w_\alpha)=\lambda_\alpha w_{\alpha\circ\theta}$ and $\sigma(w_\alpha)=\lambda_\alpha w_{-\alpha\circ\theta}$ for all $\alpha\in\Phi$ (with $\lambda_\alpha$ as in Lemma \ref{lemSigma}), then $(w_\alpha)_{\alpha\in\Phi}$ is called \emph{adapted} with respect to $\g=\fk\oplus\fp$ (and the Chevalley basis $\{h_i,x_\alpha\mid i,\alpha\}$).
\end{definition}

We first show that adapted Chevalley systems exist. Then, for real forms of inner type, we construct an adapted Chevalley system from our given Chevalley basis, cf.\ \cite[Lemma 2]{djok87}.

\begin{lemma}\label{lemSCSex}
There is an adapted Chevalley system $(v_\alpha)_{\alpha\in\Phi}$ of $\g^c$ with respect to $\g=\fk\oplus\fp$ and $\h^c$.
\end{lemma}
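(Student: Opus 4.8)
The plan is to start from a Chevalley system $(w_\alpha)_{\alpha\in\Phi}$ whose existence over $\C$ is classical (see \cite[Chapter VIII, \S 3]{bourbaki}), and then to correct the scalars so that the resulting family becomes adapted, i.e.\ compatible with both $\theta$ and $\sigma$ in the prescribed way. Given any Chevalley system $(w_\alpha)$, each $w_\alpha = t_\alpha x_\alpha$ for some $t_\alpha\in\C^\times$, and the aim is to choose these $t_\alpha$ so that $v_\alpha = t_\alpha x_\alpha$ satisfies $\theta(v_\alpha)=\lambda_\alpha v_{\alpha\circ\theta}$, $\sigma(v_\alpha)=\lambda_\alpha v_{-\alpha\circ\theta}$, $[v_\alpha,v_{-\alpha}]=-h_\alpha$, and still generates a Chevalley automorphism. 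Translating the first two conditions through Lemma \ref{lemSigma}: since $\theta(x_\alpha)=\lambda_\alpha x_{\alpha\circ\theta}$ we need $t_\alpha\lambda_\alpha = t_{\alpha\circ\theta}\lambda_\alpha$, i.e.\ $t_\alpha = t_{\alpha\circ\theta}$; since $\sigma(x_\alpha)=r_\alpha x_{-\alpha\circ\theta}$ we need $\overline{t_\alpha}\,r_\alpha = t_{-\alpha\circ\theta}\lambda_\alpha$, i.e.\ $\overline{t_\alpha}\,r_\alpha/\lambda_\alpha = t_{-\alpha\circ\theta}$. So the problem reduces to finding a function $\alpha\mapsto t_\alpha$ on $\Phi$ which is $(\alpha\circ\theta)$-invariant, interacts with $\sigma$ via the known real scalars $r_\alpha$ as above, satisfies $t_\alpha t_{-\alpha}=$ (a fixed value forced by $[v_\alpha,v_{-\alpha}]=-h_\alpha$, recalling $[w_\alpha,w_{-\alpha}]=-h_\alpha$ already, so $t_\alpha t_{-\alpha}=1$ if we start from a Chevalley system, or the corresponding constant for the original $x_\alpha$), and is multiplicative along root addition up to the sign data of the Chevalley automorphism.

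The cleanest route is: first build an adapted Chevalley system explicitly on the simple root vectors and on $\h^c$, then propagate by the structure constants. Concretely, for each $i$ choose $v_i = x_i$ when $\lambda_i = 1$ (the inner-type local situation), and for the swapped pairs $\alpha_i\leftrightarrow\alpha_{\pi(i)}$ use the already-normalised Chevalley basis $\{h_i,\hat x_\alpha\}$ from Section \ref{secRFout} with $\widehat N_{\varphi(\alpha),\varphi(\beta)}=\widehat N_{\alpha,\beta}$, which is exactly what makes $\varphi$ (hence $\theta$) act by the permutation with trivial scalars; when $\varphi(\alpha)=\alpha$ one invokes the $A_{2m}$-normalisation of \cite[\S 7.10]{kac} giving $\varphi(x_\alpha)=-x_\alpha$ and adjusts accordingly. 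Once $\theta(v_\alpha)=\lambda_\alpha v_{\alpha\circ\theta}$ holds for simple $\alpha$, induction on the height of $\alpha$ (as in Lemma \ref{lemSigma}a)) shows it holds for all $\alpha\in\Phi$, because the relevant structure constants match under $\alpha\mapsto\alpha\circ\theta$. For $\sigma$: since $\sigma = \theta\circ\tau_{\g}$-type relations hold and $\sigma$, $\theta$ commute, $\sigma(v_\alpha)$ is determined once we know it on a generating set; the condition $\sigma(v_\alpha)=\lambda_\alpha v_{-\alpha\circ\theta}$ on simple roots can be arranged by a final rescaling $v_\alpha\mapsto c_\alpha v_\alpha$ with $c_\alpha$ chosen using that $r_\alpha\in\R_{>0}$ up to sign (Lemma \ref{lemSigma}b)), then propagated by induction. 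The relation $[v_\alpha,v_{-\alpha}]=-h_\alpha$ is preserved throughout provided the rescalings satisfy $c_\alpha c_{-\alpha}=1$, which one checks is compatible with the sign choices.

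The main obstacle is consistency: the conditions "$t$ is $(\alpha\circ\theta)$-invariant", "$t$ transforms correctly under $\sigma$ via $r_\alpha$", and "$t_\alpha t_{-\alpha}=1$" must all be simultaneously satisfiable, and the induction over root height must not run into a contradiction when a root $\alpha+\beta$ can be reached by two different decompositions or when $\alpha+\beta$ is $\theta$-fixed. Here one leans on Lemma \ref{lemSigma}: the identities $\lambda_\alpha^2=1$, $r_\alpha r_{-\alpha}=1$, $r_\alpha\in\R$, $\lambda_{-\alpha\circ\theta}=\lambda_\alpha$ are precisely the compatibility relations that guarantee the inductive choices are forced and consistent. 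A subtle point is the case $\varphi(\alpha)=\alpha$ with $\chi(x_\alpha)=-x_\alpha$ (so $\theta(x_\alpha)=-x_\alpha$, $\lambda_\alpha=-1$): there $v_\alpha$ lies in $\fp^c$ and one must check the $\sigma$-condition does not clash with $\lambda_\alpha=-1$; the relation $r_\alpha\overline{r_{-\alpha}}=1$ with $r_\alpha$ real from Lemma \ref{lemSigma}b) is what saves it. Once these local compatibilities are verified, the height induction closes and the family $(v_\alpha)_{\alpha\in\Phi}$ together with $h\mapsto -h$ defines the Chevalley automorphism, so it is an adapted Chevalley system.
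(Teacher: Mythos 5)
There is a genuine gap at the crucial step, namely the claim that the $\sigma$-condition ``on simple roots can be arranged by a final rescaling \dots using that $r_\alpha\in\R_{>0}$ up to sign (Lemma \ref{lemSigma}b))''. Write out what the rescaling must do: if $v_{\alpha_i}=t_ix_i$, then the Chevalley-system condition forces $v_{-\alpha_i}=-t_i^{-1}y_i$, and the requirement $\sigma(v_{\alpha_i})=\lambda_i v_{-\alpha_{\pi(i)}}$ together with $\theta$-compatibility ($t_i=t_{\pi(i)}$) reduces to $-|t_i|^2\,r_i=\lambda_i$, i.e.\ $|t_i|^2=-\lambda_i/r_i$. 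This is solvable if and only if $r_i$ and $-\lambda_i$ have the same sign (and, for swapped pairs $\pi(i)\neq i$, if and only if $r_i<0$). Lemma \ref{lemSigma} only gives $r_i\in\R$ and the relations $r_ir_{-i}=1$, $\lambda_i^2=1$; it says nothing about how the sign of $r_i$ correlates with $\lambda_i$. That sign statement is exactly Corollary \ref{corSign}, which in the paper is deduced from Proposition \ref{propSpecialCS}b), whose proof uses the present lemma — so you cannot quote it here without circularity, and you offer no independent argument (one could supply one, e.g.\ via positive definiteness of the Hermitian form $(x,y)\mapsto -B(x,\tau(y))$ for the compact structure $\tau=\theta\circ\sigma$, evaluated on $x_i$, but nothing of the sort appears in your proposal). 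Without it, your base case may simply be unsolvable as written, and the whole construction stalls at height one. A secondary, lesser issue: the propagation of $\sigma(v_\alpha)=\lambda_\alpha v_{-\alpha\circ\theta}$ by height induction is asserted, not verified; it does go through in the outer case, but only after one notes that global $\theta$-adaptedness plus the Chevalley-system symmetry $M_{\gamma,\delta}=M_{-\gamma,-\delta}$ yield the needed identity $\lambda_\alpha\lambda_\beta M_{-\alpha\circ\theta,-\beta\circ\theta}=\lambda_{\alpha+\beta}M_{\alpha,\beta}$ (note the paper's own height induction in Proposition \ref{propSpecialCS}c) is restricted to the inner case, so ``as in Lemma \ref{lemSigma}a)'' does not cover this).

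For comparison, the paper avoids the sign problem altogether: it checks that $(z_\alpha)$ with $z_\alpha=\varepsilon_\alpha x_\alpha$ is adapted for the auxiliary real form $\tilde\g$ defined by $\theta$ and the standard compact form $\tilde\uu$, then defines the diagonal automorphism $\eta$ sending $(h_i,x_i,y_i)$ to $(h_i,|r_i|^{-1/2}x_i,|r_i|^{1/2}y_i)$ and invokes the fact that commuting compact structures are equal (\cite[Proposition 3.5]{onish}) to get $\tau\circ\eta=\eta\circ\tilde\tau$, hence $\sigma\circ\eta=\eta\circ\tilde\sigma$; transporting $(z_\alpha)$ by $\eta$ gives the adapted system, and the sign fact (Corollary \ref{corSign}) comes out afterwards as a corollary rather than being needed as an input. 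If you want to rescue your direct normalisation-plus-induction argument, you must first prove the sign correlation between $r_i$ and $\lambda_i$ independently (the compact-structure positivity argument sketched above would do), and then actually carry out the induction step for $\sigma$ as indicated.
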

\begin{proof}
For $\alpha\in\Phi$ let $z_\alpha=\varepsilon_\alpha x_\alpha$  
where $\varepsilon_{\alpha}=-1$ if $\alpha\in\Phi^-$ is negative, and
$\varepsilon_\alpha=1$ otherwise. We first prove that $(z_\alpha)_{\alpha\in\Phi}$ is a Chevalley system of $\g^c$.  Clearly, $[z_\alpha,z_{-\alpha}]=\varepsilon_\alpha\varepsilon_{-\alpha}
h_\alpha=-h_\alpha$. Let $\psi$ be the linear map defined by $\psi(h)=-h$ for
$h\in\h^c$ and $\psi(z_\alpha)=z_{-\alpha}$ for
$\alpha\in\Phi$. If $\alpha\in\Phi$, then
$\varepsilon_{-\alpha}=-\varepsilon_\alpha$ and
$\psi(x_\alpha)=\psi(\varepsilon_\alpha
z_\alpha)=\varepsilon_\alpha z_{-\alpha}=-x_{-\alpha}$. If
$\alpha,\beta\in\Phi$ with $\alpha+\beta\in\Phi$, then
$\psi([z_\alpha,z_\beta])=\psi(\varepsilon_\alpha
\varepsilon_\beta N_{\alpha,\beta}x_{\alpha+\beta})=\varepsilon_\alpha
\varepsilon_\beta \varepsilon_{\alpha+\beta}N_{\alpha,\beta}
z_{-\alpha-\beta}$, and $N_{\alpha,\beta}=-N_{-\alpha,-\beta}$ yields
$\psi([z_\alpha,z_\beta])=[\psi(z_{\alpha}),\psi(z_{\beta})]$. Also, 
$\psi([z_\alpha,z_{-\alpha}])=[\psi(z_\alpha),\psi(z_{-\alpha})]$ and 
$\psi([h,z_\alpha])=[\psi(h),\psi(z_\alpha)]$, thus $\psi$ is an automorphism and $(z_\alpha)_{\alpha\in\Phi}$ is a Chevalley system with respect to $\h^c$.

We have seen in Section \ref{secRF} that \[\tilde\uu=\Span_\R(\{\imath h_1,\ldots,\imath h_\ell, x_\alpha-x_{-\alpha},\imath(x_\alpha+x_{-\alpha})\mid \alpha\in\Phi^+\})\] is a compact real form of $\g^c$. If $\tilde\tau$ is the corresponding complex conjugation, then $\tilde\tau(x_\alpha)=-x_{-\alpha}$ for all $\alpha\in\Phi$ and $\tilde\tau(h_i)=-h_i$ for all $i$. In particular, $\tilde\tau$ and $\theta$ commute, and $\tilde\sigma=\theta\circ\tilde\tau$ is a real structure defining a real form $\tilde\g=\g(\theta,\tilde\uu)=\tilde\fk\oplus\tilde\fp$ with Cartan involution $\theta$ (or, more
precisely, the restriction of $\theta$ to $\tilde\g$). If $\pi(i)=i$, then $\imath h_i\in \tilde\fk$, otherwise $\imath(h_i+h_{\pi(i)})\in\tilde\fk$ and $h_i-h_{\pi(i)}\in\tilde\fp$, see Section \ref{secRF}, thus $\tilde\g$ has a standard Cartan subalgebra $\tilde\h$ with $(\tilde\h)^c=\h^c$. It follows readily from the definition
of $\tilde\sigma$ that $\tilde\sigma(z_\alpha)=\lambda_\alpha z_{-\alpha\circ\theta}$ for all $\alpha\in\Phi$. Clearly, $\theta(z_\alpha)=\lambda_\alpha z_{\alpha\circ\theta}$, which shows that $(z_\alpha)_{\alpha\in\Phi}$ is an adapted Chevalley system with respect to $\tilde\g=\tilde\fk\oplus\tilde\fp$ and $\h^c$.

Set $\uu =\fk\oplus \imath \fp$. Then $\uu$ is the compact form of $\g^c$ associated with 
the real form $\g=\fk\oplus\fp$ (cf. Section \ref{secRF}).
Let $\tau \colon \g^c\to \g^c$ be the complex conjugation with respect to $\uu$, then $\sigma = \theta\circ\tau$, and $\theta$ and $\tau$ commute. Thus, $\g$ and $\tilde\g$ both are real forms defined by the automorphism $\theta$ and the compact real structures $\tau$ and $\tilde\tau$, respectively. 
Using Lemma \ref{lemSigma}, we get $\tau(x_i) = r_i\lambda_i y_i$ and  $\tau(y_i) = r_i^{-1}\lambda_i x_i$. Let $\eta\colon\g^c\to \g^c$ be the automorphism which maps $(h_i,x_i,y_i)$ to $(h_i,|r_i|^{-1/2} x_i,|r_i|^{1/2} y_i)$ for all $i$. A short calculation shows that
the compact structures $\eta^{-1}\circ\tau\circ\eta$ and $\tilde\tau$ commute. As shown in 
\cite[Proposition 3.5]{onish}, commuting compact structures are equal, hence $\tau\circ \eta = \eta\circ \tilde\tau$. Again, using Lemma \ref{lemSigma}, we see that $\theta\circ \eta = \eta\circ \theta$, whence also $\sigma\circ\eta = \eta\circ\tilde\sigma$. Now consider $(v_\alpha)_{\alpha\in\Phi}$ with $v_\alpha=\eta(z_{\alpha})$. Clearly, this is a Chevalley system: First, $v_\alpha\in\g_\alpha$ and $[v_\alpha,v_{-\alpha}]=\eta(-h_{\alpha})=-h_\alpha$  for all $\alpha\in\Phi$. Second, if $\psi$ is the Chevalley automorphism corresponding to 
$(z_\alpha)_{\alpha\in\Phi}$, then $\eta\circ\psi\circ\eta^{-1}$ is the Chevalley automorphism
corresponding to  $(v_\alpha)_{\alpha\in\Phi}$. Also, for $\alpha\in\Phi$ we have
$\sigma(v_\alpha)=\sigma\circ\eta(z_\alpha)=\eta\circ\tilde\sigma(z_\alpha)=\lambda_\alpha v_{-\alpha\circ\theta}$ and $\theta(v_\alpha)=\theta\circ\eta(z_\alpha)=\eta\circ\theta(z_\alpha)=\lambda_\alpha v_{\alpha\circ\theta}$, so $(v_\alpha)_{\alpha\in\Phi}$ is adapted with respect to $\g=\fk\oplus\fp$. 
\end{proof}

\begin{proposition}\label{propSpecialCS} We use the previous notation. For $\alpha\in\Phi$ let $z_\alpha=\varepsilon_\alpha x_\alpha$  
where $\varepsilon_{\alpha}=-1$ if $\alpha\in\Phi^-$ is negative, and
$\varepsilon_\alpha=1$ otherwise. Since  $\sigma(z_{\alpha_i})=-r_iz_{-\alpha_{\pi(i)}}$ with  $r_i=r_{\pi(i)}$ by Lemma \ref{lemSigma}, there are $\tau_i=\tau_{\pi(i)}
\in\R$ such that $\tau_i\tau_{\pi(i)}r_i\in\{\pm1\}$.  Let $\psi$ be the automorphism of $\g^c$ mapping 
$(h_i,x_i,y_i)$ to $(h_i,\tau_ix_i,\tau_i^{-1}y_i)$ for all $i$; then $\psi$ commutes with $\theta$. Define $w_\alpha=\psi(z_\alpha)$ for $\alpha\in\Phi$.
\begin{ithm}
\item $(z_\alpha)_{\alpha\in\Phi}$ and  $(w_\alpha)_{\alpha\in\Phi}$ are Chevalley systems with respect to $\h^c$.
\item $\theta(w_{\alpha_i})=\lambda_i w_{\alpha_{\pi(i)}}$ and $\sigma(w_{\alpha_i})=\lambda_i w_{-\alpha_{\pi(i)}}$ for all $i$.
\item If $\g$ is of inner type, then $(w_\alpha)_{\alpha\in\Phi}$ is adapted with respect to $\g=\fk\oplus\fp$.
\end{ithm}
\end{proposition}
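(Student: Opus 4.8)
The plan is to prove the three items in turn, after first checking the two assertions the statement takes for granted, namely the existence of the $\tau_i$ and the relation $\psi\circ\theta=\theta\circ\psi$. For the first one takes $\tau_i=|r_i|^{-1/2}\in\R$; since $r_i=r_{\pi(i)}$ by Lemma~\ref{lemSigma}(b) (apply $r_\alpha=r_{\alpha\circ\theta}$ to $\alpha_i$), this choice satisfies $\tau_i=\tau_{\pi(i)}$ and $\tau_i\tau_{\pi(i)}r_i=\tau_i^2 r_i=\sign(r_i)\in\{\pm1\}$. Moreover $\tau_i^2 r_i=-\lambda_i$: as in the proof of Lemma~\ref{lemSCSex}, the conjugation $\tau$ of the compact form $\uu=\fk\oplus\imath\fp$ satisfies $\tau(x_i)=r_i\lambda_i y_i$ and $\tau=\eta\circ\tilde\tau\circ\eta^{-1}$ with $\eta(x_i)=|r_i|^{-1/2}x_i$, forcing $r_i\lambda_i=-|r_i|$, hence $\sign(r_i)=-\lambda_i$. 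For the second assertion one checks $\psi\theta=\theta\psi$ on the canonical generators: when $\pi(i)=i$ one has $\theta(x_i)=\lambda_i x_i$ and both composites send $x_i\mapsto\lambda_i\tau_i x_i$, while when $\pi(i)\ne i$ one has $\theta(x_i)=x_{\pi(i)}$ (so that the implicit $\lambda_i$ equals $1$) and the two composites agree on $x_i$, $y_i$, $h_i$ precisely because $\tau_i=\tau_{\pi(i)}$.

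For (a): that $(z_\alpha)_{\alpha\in\Phi}$ is a Chevalley system was established inside the proof of Lemma~\ref{lemSCSex}, which I would cite. Since $\psi$ fixes $\h^c$ pointwise it preserves every root space, so $w_\alpha=\psi(z_\alpha)\in\g_\alpha$ and $[w_\alpha,w_{-\alpha}]=\psi(-h_\alpha)=-h_\alpha$; and if $\psi_z$ denotes the Chevalley automorphism of $(z_\alpha)$, then $\psi\circ\psi_z\circ\psi^{-1}$ restricts to $h\mapsto-h$ on $\h^c$ and sends $w_\alpha\mapsto w_{-\alpha}$, so it is the Chevalley automorphism of $(w_\alpha)$. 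For (b): as $z_{\alpha_i}=x_i$ and $z_{\alpha_{\pi(i)}}=x_{\pi(i)}$ are positive, $\theta(w_{\alpha_i})=\psi\theta(z_{\alpha_i})=\psi(\lambda_i z_{\alpha_{\pi(i)}})=\lambda_i w_{\alpha_{\pi(i)}}$; and $w_{\alpha_i}=\tau_i x_i$ gives $\sigma(w_{\alpha_i})=\tau_i r_i x_{-\alpha_{\pi(i)}}$ by Lemma~\ref{lemSigma}(b), whereas $w_{-\alpha_{\pi(i)}}=\psi(-x_{-\alpha_{\pi(i)}})=-\tau_i^{-1}x_{-\alpha_{\pi(i)}}$, so the desired identity $\sigma(w_{\alpha_i})=\lambda_i w_{-\alpha_{\pi(i)}}$ is equivalent to $\tau_i^2 r_i=-\lambda_i$, which we just noted.

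For (c): when $\g$ is of inner type we may take $\pi=\mathrm{id}$, so $\alpha\circ\theta=\alpha$, $\theta(x_\alpha)=\lambda_\alpha x_\alpha$ and $\sigma(x_\alpha)=r_\alpha x_{-\alpha}$ for all $\alpha\in\Phi$. The $\theta$-part of ``adapted'' is immediate from $\psi\theta=\theta\psi$, since $\theta(z_\alpha)=\lambda_\alpha z_\alpha$ forces $\theta(w_\alpha)=\lambda_\alpha w_\alpha$ for every $\alpha$. For the $\sigma$-part, $\sigma(w_\alpha)=\lambda_\alpha w_{-\alpha}$, I would induct on the height of $\alpha\in\Phi^+$, the base case being~(b). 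For the inductive step write $\alpha=\beta+\gamma$ with $\beta,\gamma\in\Phi^+$ of smaller height and $[w_\beta,w_\gamma]=M\,w_\alpha$, where $M=M_{\beta,\gamma}\in\Z\setminus\{0\}$ is the structure constant of the Chevalley system $(w_\alpha)$; applying $\sigma$ and the inductive hypothesis gives $M\,\sigma(w_\alpha)=[\lambda_\beta w_{-\beta},\lambda_\gamma w_{-\gamma}]=\lambda_\beta\lambda_\gamma M_{-\beta,-\gamma}\,w_{-\alpha}$, and since the Chevalley automorphism of $(w_\alpha)$ yields $M_{-\beta,-\gamma}=M_{\beta,\gamma}=M$ we get $\sigma(w_\alpha)=\lambda_\beta\lambda_\gamma w_{-\alpha}$; finally $\lambda_\beta\lambda_\gamma=\lambda_{\beta+\gamma}$ by applying $\theta$ to $[x_\beta,x_\gamma]=N_{\beta,\gamma}x_{\beta+\gamma}$. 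The negative roots follow by applying $\sigma$ once more to $\sigma(w_\beta)=\lambda_\beta w_{-\beta}$ together with $\lambda_{-\beta}=\lambda_\beta$. (Alternatively, writing $w_\alpha=\varepsilon_\alpha\mu_\alpha x_\alpha$ with $\mu_\alpha$ multiplicative in $\alpha$, the identity to prove is $\mu_\alpha^2 r_\alpha=-\lambda_\alpha$, and $\alpha\mapsto\mu_\alpha^2 r_\alpha(-\lambda_\alpha)^{-1}$ is multiplicative by $r_{\beta+\gamma}=-r_\beta r_\gamma$ and $\lambda_{\beta+\gamma}=\lambda_\beta\lambda_\gamma$, equalling $1$ on simple roots by~(b).)

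The only real obstacle I anticipate is sign bookkeeping: one must extract the sharp relation $\tau_i^2 r_i=-\lambda_i$ (which rests on $\uu=\fk\oplus\imath\fp$ being compact, equivalently $r_i\lambda_i<0$) rather than merely $\tau_i^2 r_i\in\{\pm1\}$, and in (c) one must use that the structure constants of a Chevalley \emph{system} obey $M_{-\beta,-\gamma}=M_{\beta,\gamma}$ (with no sign change, unlike the $N_{\alpha,\beta}$ of a Chevalley basis), since this is exactly what makes $\lambda_\beta\lambda_\gamma$ agree with $\lambda_{\beta+\gamma}$.
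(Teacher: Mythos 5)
Your proposal is correct, and parts (a) and (c) follow essentially the paper's own route: (a) is the conjugation argument from the proof of Lemma \ref{lemSCSex}, and (c) is the same height induction using $M_{\alpha,\beta}=M_{-\alpha,-\beta}$ for the Chevalley-system structure constants and $\lambda_\alpha\lambda_\beta=\lambda_{\alpha+\beta}$ in the inner case.

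Part (b), however, is where you genuinely diverge. The paper never pins down the sign of $r_i$ directly: it computes $\sigma(w_{\alpha_i})=r_i'w_{-\alpha_{\pi(i)}}$ with $r_i'=-\tau_i\tau_{\pi(i)}r_i\in\{\pm1\}$, then invokes the \emph{statement} of Lemma \ref{lemSCSex} (existence of an adapted Chevalley system $(v_\alpha)$), writes $v_\alpha=c_\alpha w_\alpha$, derives $c_{-\alpha_i}=c_{\alpha_i}^{-1}$ and $c_{\alpha_i}=c_{\alpha_{\pi(i)}}$, and concludes $r_i'=\lambda_i$ by computing $\sigma(v_{\alpha_i})$ two ways; Corollary \ref{corSign} ($\sign(r_i)=-\lambda_i$) is then extracted as a by-product. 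You instead prove the sign identity first, by reusing the relation $\tau\circ\eta=\eta\circ\tilde\tau$ established \emph{inside} the proof of Lemma \ref{lemSCSex} and evaluating it on $x_i$, which forces $r_i\lambda_i=-|r_i|$; then (b) is a one-line computation since $-\tau_i^2r_i=-\sign(r_i)=\lambda_i$. This is not circular (you only use the lemma's proof, which precedes the proposition), and in effect you prove Corollary \ref{corSign} before Proposition \ref{propSpecialCS} rather than after. Both arguments ultimately rest on the same input, namely that commuting compact structures coincide; yours is shorter and avoids the $c_\alpha$ bookkeeping, at the price of relying on internal details of another proof (the specific $\eta$ with coefficients $|r_i|^{\pm 1/2}$ and the identity $\tau\circ\eta=\eta\circ\tilde\tau$) rather than on a quotable statement. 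Your sign bookkeeping checks out, including the outer case $\pi(i)\neq i$ where $\lambda_i=1$ gives $r_i<0$, consistent with what the paper uses later in Section \ref{secIsomOT}.
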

\begin{proof}
\begin{iprf}
\item This follows as in the  proof of Lemma \ref{lemSCSex}.
\item Recall $z_{\alpha_i}=x_i$ 
and $z_{-\alpha_{\pi(i)}}=-y_{\pi(i)}$ for all $i$. Now  
$w_{-\alpha_{\pi(i)}}=\psi(z_{-\alpha_{\pi(i)}})=\tau_{\pi(i)}^{-1}z_{-\alpha_{\pi(i)}}$ yields \[\sigma(w_{\alpha_i})=
\sigma(\psi(z_{\alpha_i}))=\sigma
(\tau_i z_{\alpha_i})=-\tau_ir_i z_{-\alpha_{\pi(i)}}=-\tau_ir_i\tau_{\pi(i)}w_{-\alpha_{\pi(i)}}=r_i'w_{-\alpha_{\pi(i)}},\] 
where $r_i'=-\tau_i\tau_{\pi(i)}r_i\in\{\pm1\}$. We have $\theta(x_i)=\lambda_i x_i$ if $\pi(i)=i$, and $\theta(x_i)=x_{\pi(i)}$ otherwise, and, therefore, $\tau_i=\tau_{\pi(i)}$ implies that  $\theta(w_{\alpha_i})=\lambda_i w_{\alpha_i}$ if $\pi(i)=i$, and $\theta(w_{\alpha_i})=w_{\alpha_{\pi(i)}}$ otherwise. By Lemma \ref{lemSCSex}, there exists an adapted Chevalley 
system $(v_\alpha)_{\alpha\in\Phi}$ with respect to $\g=\fk\oplus\fp$ and $\h^c$; each $v_\alpha$ can be written as  
$v_\alpha=c_\alpha w_\alpha$ for some  $c_\alpha\in\C$. It follows from 
\[-h_i=[v_{\alpha_i},v_{-\alpha_i}]=c_{\alpha_i}c_{-\alpha_i}[w_{\alpha_i},w_{-\alpha_i}]=
-c_{\alpha_i}c_{-\alpha_i}h_i\] that $c_{-\alpha_i}=c_{\alpha_{i}}^{-1}$ for all $i$. If $\pi(i)\ne i$, then   
\[c_{\alpha_i}w_{\alpha_{\pi(i)}}=\theta(c_{\alpha_i}w_{\alpha_i})=\theta(v_{\alpha_i})=v_{\alpha_{\pi(i)}}=c_{\alpha_{\pi(i)}}w_{\alpha_{\pi(i)}},\] hence $c_{\alpha_i}=c_{\alpha_{\pi(i)}}$ for all $i$. Thus $\overline{c_{\alpha_i}}c_{\alpha_{\pi(i)}}>0$ is real for every $i$, and  $r_i'=\lambda_{i}$ follows from $r_i',\lambda_i\in\{\pm1\}$ and \[\lambda_iv_{-\alpha_{\pi(i)}}=\sigma(v_{\alpha_i})=\overline{c_{\alpha_i}}\sigma(w_{\alpha_i})
=\overline{c_{\alpha_i}}r_i'c_{-\alpha_{\pi(i)}}^{-1}v_{-\alpha_{\pi(i)}}=
r_i'\overline{c_{\alpha_i}}c_{\alpha_{\pi(i)}}v_{-\alpha_{\pi(i)}}.\]

\item  By b) we know that  $\sigma(w_{\alpha_i})=\lambda_{\alpha_i} w_{-\alpha_{\pi(i)}}$, and Lemma \ref{lemSigma} yields $\sigma(w_{-\alpha_i})=\lambda_{-\alpha_i} w_{\alpha_{\pi(i)}}$ for $i=1,\ldots,\ell$. For $\alpha,\beta\in \Phi$ with $\alpha+\beta\in\Phi$ write
$[w_\alpha,w_\beta]=M_{\alpha,\beta}w_{\alpha+\beta}$ where  $M_{\alpha,\beta}=M_{-\alpha,-\beta}$ is real 
(in fact, integral). Suppose now that for $\alpha,\beta\in\Phi$ we have $\sigma(w_\alpha)=
\lambda_{\alpha} w_{-\alpha\circ\theta }$ and
$\sigma(w_\beta)=\lambda_{\beta} w_{-\beta\circ\theta}$. Then
\[M_{\alpha,\beta}\sigma(w_{\alpha+\beta})=\sigma([w_\alpha,w_\beta])=
[\sigma(w_\alpha),\sigma(w_\beta)]=\lambda_\alpha
\lambda_\beta M_{-\alpha\circ\theta,-\beta\circ\theta } w_{-(\alpha+\beta)\circ\theta}.\] 
If $\g$ is of inner type, then $\alpha\circ\theta=\alpha$ and $\lambda_\alpha\lambda_\beta=\lambda_{\alpha+\beta}$ for all $\alpha,\beta \in\Phi$. Thus, in this case, $M_{\alpha,\beta}= M_{-\alpha\circ\theta,-\beta\circ\theta }= M_{-\alpha,-\beta }$, and induction on the height of $\alpha$ proves that $\sigma(w_\alpha)=\lambda_\alpha w_{-\alpha\circ\theta}$. Similarly,  $\theta(w_\alpha)=\lambda_\alpha w_{\alpha\circ\theta}$ for all $\alpha$, thus $(w_\alpha)_{\alpha\in\Phi}$ is adapted with respect to $\g=\fk\oplus \fp$.
\end{iprf}
\end{proof}

The proof of Proposition \ref{propSpecialCS}b) has the following important corollary, which we use  in Section \ref{secIso}. Recall that  $\theta(x_i)=\lambda_i x_{\pi(i)}$ and $\sigma(x_i)=r_i y_{\pi(i)}$ for all $i$.

\begin{corollary}\label{corSign}
The coefficients $r_i$ and $-\lambda_i$ have the same sign for all $i$.
\end{corollary}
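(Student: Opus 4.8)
The plan is to extract the corollary directly from the computations already performed in the proof of Proposition \ref{propSpecialCS}b); no genuinely new argument is needed. Recall from that proof that, with $w_\alpha=\psi(z_\alpha)$ and $\psi$ sending $(h_i,x_i,y_i)$ to $(h_i,\tau_ix_i,\tau_i^{-1}y_i)$, the expansion of $\sigma(w_{\alpha_i})$ produced the identity
$$\sigma(w_{\alpha_i}) = r_i'\,w_{-\alpha_{\pi(i)}},\qquad r_i' = -\tau_i\tau_{\pi(i)}r_i,$$
and that the remainder of the proof of b) established $r_i'=\lambda_i$ (via the adapted system $(v_\alpha)_{\alpha\in\Phi}$ of Lemma \ref{lemSCSex}, writing $v_{\alpha_i}=c_{\alpha_i}w_{\alpha_i}$ and using $\overline{c_{\alpha_i}}c_{\alpha_{\pi(i)}}=|c_{\alpha_i}|^2>0$ together with $r_i',\lambda_i\in\{\pm1\}$). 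Importantly, this conclusion of b) was obtained for an arbitrary real form $\g=\fk\oplus\fp$, not only for those of inner type, so it is available for every index $i$.

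First I would combine the two displayed facts: $\lambda_i=r_i'=-\tau_i\tau_{\pi(i)}r_i$, hence $-\lambda_i=\tau_i\tau_{\pi(i)}r_i$. Next I would use the normalisation $\tau_i=\tau_{\pi(i)}$ built into the statement of Proposition \ref{propSpecialCS}, together with $\tau_i\ne 0$ (forced, since $\tau_i\tau_{\pi(i)}r_i\in\{\pm1\}$ rules out $\tau_i=0$), to get $\tau_i\tau_{\pi(i)}=\tau_i^2>0$. Therefore $-\lambda_i=\tau_i^2 r_i$, which has the same sign as $r_i$; since $\lambda_i\in\{\pm1\}$, this is precisely the assertion that $r_i$ and $-\lambda_i$ have the same sign.

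There is essentially no obstacle here: the proof is pure bookkeeping once one cites the correct intermediate equations ($r_i'=\lambda_i$ and $r_i'=-\tau_i\tau_{\pi(i)}r_i$) from the proof of part b) and observes that $\tau_i\tau_{\pi(i)}$ is a positive real number. The only point worth flagging is that the relevant step of the proof of b) holds in full generality, so the corollary indeed applies to all $i$ and all real forms. If a self-contained argument were preferred, one could instead recompute $\sigma(v_{\alpha_i})$ for the adapted system of Lemma \ref{lemSCSex} directly in terms of the $x_i$, compare the coefficient of $v_{-\alpha_{\pi(i)}}$ with $\lambda_i$, and transfer the sign to $r_i$ through the positive real factor $|c_{\alpha_i}|^2$; this reproduces the same conclusion without explicitly invoking the chain $\lambda_i=r_i'=-\tau_i^2r_i$.
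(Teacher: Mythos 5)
Your proposal is correct and follows essentially the same route as the paper, which also proves the corollary by citing the chain $\lambda_i=r_i'=-\tau_i\tau_{\pi(i)}r_i=-\tau_i^2 r_i$ established in the proof of Proposition \ref{propSpecialCS}b). Your added remark that part b) holds for arbitrary real forms (not only inner type) is accurate and correctly justifies applying the corollary to all $i$.
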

\begin{proof}
In the proof of Proposition \ref{propSpecialCS}b) we have shown that $\lambda_i = r_i'=-\tau_i \tau_{\pi(i)} r_i = -\tau_i^2 r_i$.
\end{proof}

\section{Constructing complex Cayley triples}\label{secGetCT}

\noindent Let $\g=\fk\oplus\fp$  be as  in Section \ref{secRF}, with complexification $\g^c$, Cartan involution $\theta$, and complex conjugation $\sigma$. As usual, we denote by $\Phi$ a root system of $\g^c$ with basis of simple roots $\Delta$; let $\{h_i,x_\alpha\mid i,\alpha\}$ be a corresponding Chevalley basis. We now discuss our Main Problem, cf.\ Section \ref{secKS}, that is, given a homogeneous $\SL_2$-triple $(f,h,e)$ in $\g^c$, we want to construct a complex Cayley triple 
$(f',h',e')$ which is $K^c$-conjugate to $(f,h,e)$.  As constructed in Section \ref{sec:carralg}, we also assume we have a
standard carrier algebra ${\fs^c}=\fs^c(e,\mathfrak{t})$ containing $f,h,e$, cf.\ Remark \ref{remCAT}, and normalised by the Cartan subalgebra $\h_0^c = \h_0 +\imath
\h_0$ of $\fk^c$ with  $\h_0\subseteq \fk$ as in Section \ref{secRF}. 

We will see in Section  \ref{secCSCT} that ${\fs^c}$ is $\sigma$-stable, hence $\fs={\fs^c}\cap \g$ is a real form of ${\fs^c}$. Also, we will see that ${\fs^c}$ is $\theta$-stable, thus  \[(\star)\quad \fs = ({\fs^c}\cap\fk) \oplus ({\fs^c}\cap\fp)\] is a Cartan decomposition whose Cartan involution is the restriction of $\theta$ to $\fs$.  Note that $\fs_0\cap \fk^c$ and $\fs_0\cap \fk$ contain Cartan subalgebras of ${\fs^c}$ and ${\fs}$, respectively, namely, 
$\h_0^c\cap \fs_0$ and $\h_0\cap \fs_0$. In particular, the real form ${\fs}$ is always of inner type and $\h_0\cap \fs_0$ is a standard Cartan subalgebra. Thus the results of Section \ref{secCS} can be applied: we show in Section \ref{secCSCT} how to construct an adapted Chevalley system for ${\fs^c}$; here adapted always means with respect to $\h_0^c\cap \fs_0$, the Cartan decomposition
$(\star)$, and a chosen Chevalley basis of $\fs^c$.

By construction, the triple $(f,h,e)$ is also a homogeneous $\SL_2$-triple in ${\fs^c}$. The approach of \cite{djok87} 
is to find $x\in\fs_1$ with $[x,\sigma(x)]=h$ so that $(\sigma(x),h,x)$ is a complex Cayley 
triple in ${\fs^c}$, thus also in $\g^c$. By the Kostant-Sekiguchi correspondence and 
\cite[Lemma 4]{rallis}, such an $x$ exists and $(\sigma(x),h,x)$ is $K^c$-conjugate to 
$(f,h,e)$. If ${\fs^c}$ is principal, then Chevalley systems can be used to find $x$, see Section \ref{secPrincCase}. If ${\fs^c}$ is not principal, then we make a case distinction 
and use induction, see Section \ref{secNonPrincCase}.

\subsection{Constructing an adapted Chevalley system}\label{secCSCT}
In the following, let $\Phi_{\fs^c}$ be the root system of ${\fs^c}$ with respect to $\h_0^c\cap \fs_0$; let $\Delta_{\fs^c}=\{\beta_1,\ldots,\beta_s\}$ be a basis of simple roots. As mentioned in Section \ref{sec:carralg}, we can assume that each root space $\fs_{\beta_i}$ either lies in $\fs_0$ or $\fs_1$.\medskip

\subsubsection{Inner type} If $\g$ is of inner type, then $\h_0^c\leq \fk^c$ is also a Cartan subalgebra of $\g^c$, hence 
$\Phi_{\fs^c}$ can be considered as a root subsystem of $\Phi$. This implies that  $\{x_\alpha\mid \alpha\in \Phi_{\fs^c}\}$, along with
certain elements of $\h_0^c\cap {\fs_0}$, forms a Chevalley basis of ${\fs^c}$. We denote it by $\{k_i,w_\alpha\mid \alpha\in\Phi_{\fs^c},  i=1,\ldots, s\}$ and let $\{k_i,a_i,b_i\mid i\}$ be the canonical generating set it contains. As usual, write $k_\alpha=[w_\alpha,w_{-\alpha}]$ for  $\alpha\in \Phi_{\fs^c}$. We have seen in Section \ref{secRFinn} that $\sigma(w_\alpha) \in\{ \pm w_{-\alpha}\}$ and $\theta(w_\alpha)\in\{\pm w_\alpha\}$,  hence ${\fs^c}$ is $\sigma$- and $\theta$-stable.
For $\Phi_{\fs^c}$ we use an ordering compatible with that of $\Phi$. Let $z_\alpha =
w_\alpha$ and $z_{-\alpha}=-w_{-\alpha}$ for $\alpha\in\Phi_{\fs^c}^+$, then  $(z_\alpha)_{\alpha\in\Phi_{\fs^c}}$ is
an adapted Chevalley system of ${\fs^c}$.\medskip

\subsubsection{Outer type} Now let  $\g$ be of outer type with defining outer automorphism $\theta=\varphi\circ
\chi$. By construction, each homogeneous
component ${\fs}_k$ lies either in $\fk^c$ or in $\fp^c$, which shows that ${\fs^c}$ is $\theta$-stable. By definition, each $\fs_k$ is normalised
by $\h_0^c$, thus it is a sum of weight spaces (with respect to $\h_0^c$) as considered in  Sections \ref{secRFout} and \ref{secRFout2}; in the following we use the notation introduced in these sections. Let $\alpha\in\Phi_{\fs^c}$, then $\fs_\alpha$ is an $\h_0^c$-weight space, and it is either
contained in $\fk^c$ or in $\fp^c$ (since it lies in a homogeneous component $\fs_k$). These
observations show that there is an $\alpha'\in \Phi$ such that  either 
$\fs_{\alpha}=\Span_\C(u_{\alpha'})$ or $\fs_{\alpha}=\Span_\C(v_{\alpha'})$, and, accordingly,  $\fs_{-\alpha}=\Span_\C(u_{-\alpha'})$ or $\fs_{-\alpha}=\Span_\C(v_{-\alpha'})$.  Since $\sigma(u_{\alpha'})=\pm u_{-\alpha'}$ and $\sigma(v_{\alpha})=\pm v_{-\alpha'}$, this shows that ${\fs^c}$ is stable under $\sigma$. We can now define a new set of canonical generators $\{k_i,a_i,b_i \mid i=1,\ldots,s\}$ for $\fs$; we make a case distinction:
\begin{items}
\bullit If $\fs_{\beta_i}$ is spanned by $u_\alpha=x_\alpha$ with $\varphi(\alpha) = \alpha$, then define
$a_i = x_\alpha$, $b_i = x_{-\alpha}$ and $k_i = [a_i,b_i]$. 
\bullit Now let $\fs_{\beta_i}$ be spanned by $u_\alpha= x_\alpha+x_{\varphi(\alpha)}$ with 
$\varphi(\alpha) \neq \alpha$. Note that $\beta=\alpha-\varphi(\alpha)$ is not a root because  $\varphi$ maps positive roots on positive roots but $\varphi(\beta)=-\beta$. This proves 
 $[u_\alpha,u_{-\alpha}] = h_\alpha + h_{\varphi(\alpha)}$. Also, it follows that $\langle \alpha,\varphi(\alpha)\rangle\leq 0$, cf.\ \cite[Lemma 9.4]{humph}, and finally $\langle \alpha,\varphi(\alpha)\rangle \in \{0,-1\}$, as $\Phi$ is simply laced, which means that there is only one root length; in particular, $\langle \alpha,\varphi(\alpha)\rangle = \langle\varphi(\alpha),\alpha\rangle$. The latter now implies that $[h_\alpha + h_{\varphi(\alpha)},u_\alpha] = (2+\langle \varphi(\alpha),\alpha\rangle)u_\alpha$ since $\varphi(\alpha)(h_\alpha)=\langle \varphi(\alpha),\alpha\rangle = \langle \alpha,\varphi(\alpha)\rangle=\alpha(h_{\varphi(\alpha)})$. If $\langle \varphi(\alpha),\alpha\rangle=0$, then we define $a_i = u_\alpha$, $b_i = 
u_{-\alpha}$, and $k_i = [a_i, b_i]$. Otherwise,  we set $a_i = \sqrt{2} u_\alpha$, $b_i = 
\sqrt{2} u_{-\alpha}$, and $k_i = [a_i, b_i]$.
\bullit If $\fs_{\beta_i}$ is spanned by $v_\alpha= x_\alpha-x_{\varphi(\alpha)}$, then we do exactly 
the same  as in the previous case with $u$ replaced by $v$.
\end{items}

In all cases we find $a_i\in \fs_{\beta_i}$, $b_i \in \fs_{-\beta_i}$, and 
$k_i = [a_i, b_i ]$ such that $[k_i ,a_i] = 2b_i$ for all $i$. By Proposition \ref{prop:cangens},  $\{k_i, a_i, b_i\mid i\}$ is a canonical generating set for $\fs^c$, and, by construction, $\sigma(a_i)= \pm b_i$ for all $i$. We extend this canonical generating set to a Chevalley basis $\{k_i, w_\alpha\mid i,\alpha\}$ of  ${\fs^c}$  such that $w_{\alpha_i} = a_i$ and $w_{-\alpha_i} = b_i$; as usual, write $k_\alpha=[w_\alpha,w_{-\alpha}]$ for all $\alpha$. We now define $z_\alpha = w_\alpha$ for $\alpha >0$ and $z_\alpha =
-w_{\alpha}$ for $\alpha <0$; it is straightforward to verify that $(z_{\alpha})_{\alpha\in\Phi_{\fs^c}}$ 
is an adapted Chevalley system  of ${\fs^c}$, cf.\  Proposition \ref{propSpecialCS}.\medskip

The conclusion is that for all $\g$ we can find an adapted Chevalley system of ${\fs^c}$, and the coefficients 
of its elements with respect to the given basis  of $\g$ lie in 
$\Q(\imath, \sqrt{2})$, in particular, in $\SqrtField(\imath)$.

\subsection{The principal case}\label{secPrincCase} 

This construction follows \cite[Lemma 3]{djok87}. We use the previous notation and suppose 
that the carrier algebra  ${\fs^c}$ of  $(f,h,e)$ is principal, that is, there is a basis 
$\Delta_{\fs^c}$ of $\Phi_{\fs^c}$ such that for every 
$\alpha\in\Delta_{\fs^c}$ we have $\fs_\alpha\subseteq \fs_1$. Let 
$(z_\alpha)_{\alpha\in\Phi_{\fs^c}}$ be the adapted Chevalley system for ${\fs^c}$ as constructed in the previous section. We want to find $x\in \fs_1$ with $[x,\sigma(x)]=h$ 
of the form  $x=\sum_{\alpha\in\Delta_{\fs^c}} c_\alpha z_\alpha$ with all $c_\alpha$ real. Note that 
$\sigma(x)=-\sum_{\alpha\in\Delta_{\fs^c}} c_\alpha z_{-\alpha}$ and $\alpha-\beta\notin\Phi_{\fs^c}$ for all 
$\alpha,\beta\in\Delta_{\fs^c}$. Thus, the equation we have to solve is $h=[x,\sigma(x)]=
\sum_{\alpha\in\Delta_{\fs^c}} c_\alpha^2 k_\alpha$; recall that $[z_\alpha,z_{-\alpha}]= -k_\alpha$. Note 
that $\beta(h)=2$ for all $\beta\in\Delta_{\fs^c}$ since $z_\beta\in\fs_1$ and $h/2$ is the defining 
element of ${\fs^c}$. This shows that our equation is equivalent to the system of equations  
$2=\sum_{\alpha\in\Delta_{\fs^c}} d_\alpha \beta(k_\alpha)$ with $d_\alpha=c_\alpha^2$ where $\beta$ ranges 
over $\Delta_{\fs^c}$.  The coefficients $\beta(k_\alpha)$ of this system are the entries 
of the Cartan matrix of $\Phi_{\fs^c}$, whose inverse has non-negative
entries, cf.\ \cite[Section 13.1]{humph}. Thus, the system  has a solution with all $d_\alpha\geq 0$
real. In conclusion, to construct $x$, we 
first compute $h=\sum_{\alpha\in\Delta_{\fs^c}} d_\alpha k_\alpha$, and then set $x=\sum_{\alpha\in\Delta_{\fs^c}} 
c_\alpha z_\alpha$ where $c_\alpha=\sqrt{d_\alpha}$ is real for every $\alpha\in\Delta_{\fs^c}$. We now 
show that each $c_\alpha\in \SqrtField$. For every $\alpha\in\Delta_{\fs^c}\subseteq \Phi_{\fs^c}$, the 
element $k_\alpha=[w_\alpha,w_{-\alpha}]$ is a $\Z$-linear combination of $k_1,\ldots,k_s$, the 
elements of the Chevalley basis of $\fs^c$ that span its Cartan subalgebra $\h_0^c\cap \fs_0$, see \cite[Theorem 25.2]{humph}. As shown in the previous paragraph, these elements are 
$\Z$-linear combinations of  $h_1,\ldots,h_\ell$, the elements elements of the Chevalley basis 
of $\g^c$ that span $\h^c$. Similarly, the element $h$, which is the characteristic of an
$\SL_2$-triple, is a $\Z$-linear combination of $h_1,\ldots,h_\ell$. 
 Together, all this implies that the 
 $d_\alpha$ are in fact rational, thus $c_\alpha\in\SqrtField$.

\subsection{Non-principal case}\label{secNonPrincCase} 

Now suppose that the carrier algebra ${\fs^c}$ of $(f,h,e$) is non-principal. As mentioned above, there exists $x\in \fs_1$ such that $(\sigma(x),h,x)$ is a complex Cayley 
triple in the same $K^c$-orbit as $(f,h,e)$. However, constructing $x$ is not straightforward.
We first set up the system of rational polynomial equations in the coefficients of $x$ with 
respect to a basis of $\fs_1$, equivalent to $[x,\sigma(x)]=h$. Note that this is a 
system of $\dim \fs_0$ polynomial equations in $\dim \fs_1$ variables.
Then in order to solve them we use a brute-force approach, that is, for $i=1,2,3,\ldots$
we set all but $i$ indeterminates in these equations to zero. For each equation system that
arises we check, using Gr\"obner bases (see for example \cite{clo}),
whether a solution over $\C$ exists. We stop when we 
find an equation system consisting of equations of the form $T^2 = a$, where $a\in \Q$ and
$T$ is an indeterminate, or $T_c=a_{c_1}T_{c_1}^2+\ldots+a_{c_m}T_{c_m}^2$, where each $T_{c_i}$ satisfies an equation of the first type. It is then straightforward to obtain a solution over  
$\SqrtField(\imath)$. This systematic approach for constructing a complex Cayley triple $(\sigma(x),h,x)$ can easily be carried out automatically by a computer. It turned out to work well in all our computations for the carrier algebras in the real forms constructed in Section \ref{secRF}; our experiments include all simple real Lie algebras of rank at most 8. Unfortunately, we have no proof that a solution of the equation system always exists over the field $\SqrtField(\imath)$, hence we cannot prove that our approach will always work.\medskip


\subsubsection{A database}\label{secDBCA}
To reduce work, we have constructed a database of the simple non-principal 
carrier algebras that appeared during our calculations. Let ${\fs^c}$ be such a carrier algebra.  As shown in Section \ref{secCSCT}, there is a canonical generating set $\{k_i,a_i,b_i\mid i=1,\ldots,s\}$ of $\fs^c$ such that $a_i\in\fs_{\varepsilon_i}$ with $\varepsilon_i\in\{0,1\}$ and $\sigma(a_i)=\lambda_i b_i$ with $\lambda_i\in\{\pm 1\}$ for all $i$. Since $\sigma(k_i) = -k_i$ for all $i$, the map $\sigma$ is determined  by the signs $\lambda_1,\ldots,\lambda_s$. Moreover,  $k_1,\ldots,k_s\in
\fs_0$, and if $a_i\in\fs_k$, then $b_i\in \fs_{-k}$. Thus, the following data describes ${\fs^c}$, its grading, and $\sigma$ completely; we store this data in our database:

\begin{items}
\bullit a multiplication table, canonical generators $\{k_i,a_i,b_i\mid i\}$, and Cartan matrix $C$,
\bullit the signs $\lambda_1,\ldots,\lambda_s$ and $\varepsilon_1,\ldots,\varepsilon_s$,
\bullit a complex Cayley triple $(f,h,e)$ in $\fs$ such that $e\in \fs_1$ is in general position
\end{items}

Suppose in our computations we consider a real semisimple Lie algebra $\g' = \fk'\oplus \fp'$ with complexification
$(\g')^c = (\fk')^c \oplus (\fp')^c$ and complex conjugation $\sigma'$. Let $(f',h',e')$ be a homogeneous $\SL_2$-triple in $(\g')^c$, and we want to find a conjugate complex Cayley triple in $(\g')^c$. As before, we first construct the carrier algebra ${(\fs')^c}$ of the triple. If it is principal, then we proceed as in Section \ref{secPrincCase}, so let it be non-principal. Recall that ${(\fs')^c}$ is semisimple and, by considering its simple components separately, we can assume that ${(\fs')^c}$ itself is simple. Suppose in our database there exists a simple carrier algebra ${\fs^c}$ whose parameters as described above satisfy the following:

\begin{items}
\item[1)]  $(\fs')^c$ has canonical generators $\{k_i',a_i',b_i'\mid i\}$ with Cartan matrix $C$,
\item[2)]  if $\sigma'(a_i')=\lambda_i' b_i'$, then  $\sign(\lambda_i')=\sign(\lambda_i)$ for all $i$,
\item[3)]  if $a_i'\in \fs'_{\varepsilon_i'}$, then $\varepsilon_i'=\varepsilon_i$ for all $i$.
\end{items}
 If all this holds, then  we can get a complex Cayley triple in ${(\fs')^c}$ as follows. Let $\varphi$ be the isomorphism of from ${\fs^c}$ to ${(\fs')^c}$ which maps $(k_i,a_i,b_i)$ to $(k_i',\mu_i a_i', \mu_i^{-1} b_i')$ where $\mu_i = \sqrt{\lambda_i/\lambda'_i}$ for all $i$. Obviously $\varphi$  is an isomorphism of $\Z$-graded Lie algebras. A short calculation shows that the antilinear homomorphisms $\varphi\circ \sigma$ and $\sigma'\circ\varphi$ agree on the canonical generators of ${\fs^c}$, thus $\varphi\circ \sigma=\sigma'\circ\varphi$. Since $\varphi$ maps the unique defining element $h/2$ of ${\fs^c}$ onto the unique defining element $h'/2$ of ${(\fs')^c}$, we have $h'=\varphi(h)$. Let $x=\varphi(e)$ and $y=\varphi(f)$, then  $(y,h',x)$ is a complex Cayley triple in ${(\fs')^c}$. Since $x\in \fs'_1$ is in general position,  $(y,h',x)$ is $(K')^c$-conjugate to $(f',h',e')$. The conclusion is that by storing the simple carrier algebras in a database we can find a
complex Cayley triple in a carrier algebra by a look-up in the database.

\section{Isomorphisms}\label{secIso}

\noindent Let $\g^c$ be a simple complex Lie algebra with real form $\g=\fk\oplus\fp$ and Cartan involution $\theta$ and complex conjugation $\sigma$. As usual, we extend $\theta$ to an automorphism of $\g^c$. Let $(\g')^c$ be a second simple complex Lie algebra with real form $\g' = \fk'\oplus \fp'$, Cartan involution $\theta'$, and complex conjugation $\sigma'$. We consider the problem to decide whether $\g$ and $\g'$ are isomorphic, and, if they are, to find an isomorphism. For this we may obviously assume that $\g^c$ and $(\g')^c$ are isomorphic.

Recall that a Cartan decomposition is unique up to conjugacy, see \cite[Theorem 5.1]{onish}. Thus, if $\g$ and $\g'$ are isomorphic, then there also exists an isomorphism $\psi\colon \g\to \g'$ with $\psi(\fk)=\fk'$ and $\psi(\fp)=\fp'$. Clearly, such an isomorphism extends to an isomorphism $\psi\colon \g^c\to (\g')^c$ with $\psi\circ\theta = \theta'\circ\psi$ and  $\psi\circ\sigma = \sigma'\circ\psi$. Conversely, if we find an isomorphism \[(\ast)\quad \psi\colon \g^c\to (\g')^c\quad\text{with } \psi\circ\theta = \theta'\circ\psi\text{ and  }\psi\circ\sigma = \sigma'\circ\psi,\] then $\psi$  restricts to an isomorphism $\psi \colon \g\to \g'$ with $\psi(\fk) = \fk'$ and $\psi(\fp) = \fp'$. 

We now describe  a construction of the isomorphism $(\ast)$, which fails if and only if $\g$ and $\g'$ are not isomorphic. Our main tool is the technique described in the following preliminary section.

\subsection{Weyl group action}\label{secWGA}
We consider the following set-up. Let $\h^c\leq \g^c$ be a Cartan subalgebra of $\g^c$ with corresponding root system $\Phi$ and basis of simple roots $\Delta=\{\alpha_1,\ldots,\alpha_\ell\}$. Let $W$ be the Weyl group associated to $\Phi$. As usual, let $\{h_i,x_i,y_i\mid i\}$  be a canonical generating set contained in a Chevalley basis $\{h_i, x_\alpha\mid i,\alpha\}$ of $\g^c$. Note that $\theta(x_\alpha) \in \g_{\alpha\circ\theta}$, and we suppose that
$\alpha\mapsto \alpha\circ\theta$ preserves $\Delta$. Then $\theta=\varphi\circ\chi=
\chi\circ\varphi$, where $\varphi$ is a diagram automorphism permuting $\Delta$, 
and $\chi$ is an inner automorphism with $\chi(h) = h$ for all $h\in \h^c$. 
Let the permutation $\pi$ be defined by 
$\varphi(\alpha_i)=\alpha_{\pi(i)}$. We further suppose that $\theta(x_\alpha)=\lambda_\alpha x_{\alpha\circ\theta}$ with $\lambda_\alpha=1$ if $\alpha\circ\theta\ne\alpha$. Thus, $\lambda_\alpha\in\{\pm 1\}$ for all $\alpha\in\Phi$; we write $\lambda_i=\lambda_{\alpha_i}$ and call $\lambda_1,\ldots,\lambda_\ell$ the parameters of $\theta$.

By abuse of notation, to $w\in W$ we associate the automorphism $w\in\Aut(\g^c)$ which maps $(h_i,x_i,y_i)$ to $(h_{w(\alpha_i)},x_{w(\alpha_i)},x_{-w(\alpha_i)})$ for all $i$. 
Let $\Delta_\theta = \{ \alpha\in \Delta \mid \alpha\circ\theta = \alpha \}$, let $\Phi_\theta$ be the root subsystem of $\Phi$ with basis $\Delta_\theta$, and let $W_\theta$ be its Weyl group. 

\begin{lemma}\label{lemWStab}
If $w=s_{\alpha_k}\in W_\theta$, then  $\alpha\to\alpha\circ \theta$   preserves the basis of simple roots $w(\Delta)$.
\end{lemma}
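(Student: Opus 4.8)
The plan is to reduce the statement to a property of the diagram-automorphism part of $\theta$ acting on $\Phi$. First I would make the induced action on roots explicit: since $\theta=\varphi\circ\chi$ with $\chi$ fixing $\h^c$ pointwise, for every $\alpha\in\Phi$ one has $\alpha\circ\theta=\alpha\circ\varphi$ as functionals on $\h^c$; and since $\varphi(h_j)=h_{\pi(j)}$, together with the $\pi$-invariance of the Cartan matrix, $(\alpha_i\circ\varphi)(h_j)=\langle\alpha_i,\alpha_{\pi(j)}\rangle=\langle\alpha_{\pi(i)},\alpha_j\rangle=\alpha_{\pi(i)}(h_j)$, so the map $\rho\colon\Phi\to\Phi$, $\rho(\alpha)=\alpha\circ\theta$, is exactly the linear extension of the Dynkin diagram automorphism $\alpha_i\mapsto\alpha_{\pi(i)}$. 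In particular $\rho$ is an isometry of the root space, it preserves $\Delta$, and it fixes $\Span_\R(\Delta_\theta)$ pointwise.

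Next I would observe that the hypothesis $w=s_{\alpha_k}\in W_\theta$ forces $\rho(\alpha_k)=\alpha_k$. Indeed $W_\theta$ is generated by reflections in roots of $\Delta_\theta$, so it acts trivially on the orthogonal complement of $V:=\Span_\R(\Delta_\theta)$; as $s_{\alpha_k}$ fixes precisely the hyperplane $\alpha_k^\perp$, this gives $V^\perp\subseteq\alpha_k^\perp$, hence $\alpha_k\in V$, and $\rho$ fixes $V$ pointwise. (Equivalently: a reflection of $W$ lying in the reflection subgroup $W_\theta$ is the reflection in some root of $\Phi_\theta$, which is $\rho$-fixed.)

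Then I would invoke the standard fact that a diagram automorphism normalises the Weyl group, with $\rho\,s_\gamma\,\rho^{-1}=s_{\rho(\gamma)}$ for every root $\gamma$ (immediate from $\rho$ being an isometry permuting $\Phi$). Applied to $\gamma=\alpha_k$, using $\rho(\alpha_k)=\alpha_k$, this yields $\rho\,s_{\alpha_k}=s_{\alpha_k}\,\rho$ as maps on the root space. Combining this with $\rho(\Delta)=\Delta$ finishes the argument:
\[
  \rho\bigl(w(\Delta)\bigr)=\rho\bigl(s_{\alpha_k}(\Delta)\bigr)=s_{\alpha_k}\bigl(\rho(\Delta)\bigr)=s_{\alpha_k}(\Delta)=w(\Delta).
\]

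I expect the only delicate point to be the bookkeeping in the first step — correctly reading off the induced action of $\theta$ on $\Phi$, where one must use that the inner factor $\chi$ disappears on $\h^c$ and invoke the $\pi$-invariance of the Cartan matrix to identify $\alpha_i\circ\varphi$ with $\alpha_{\pi(i)}$. Once $\rho$ is identified with the diagram automorphism and $\rho(\alpha_k)=\alpha_k$ is established, everything else is formal manipulation of reflections.
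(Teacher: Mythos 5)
Your proof is correct, and it takes a genuinely different route from the paper's. The paper argues inside the Lie algebra: it dismisses the inner case (where $\alpha\circ\theta=\alpha$ for all $\alpha$), and in the outer case, where $\Phi$ is simply laced, it applies $\theta$ to the transported Chevalley generators $w(x_k)$ and $w(x_j)$ and makes a case distinction on $\langle\alpha_j,\alpha_k\rangle\in\{0,-1\}$ to conclude $w(\alpha_j)\circ\theta=w(\alpha_{\pi(j)})\in w(\Delta)$; as a by-product this computation tracks the signs that feed directly into the parameter-change formula $\tilde\lambda_j=\lambda_j\lambda_k^{\langle\alpha_j,\alpha_k\rangle}$ used right after the lemma. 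You instead stay entirely at the level of the root system: you identify $\rho\colon\alpha\mapsto\alpha\circ\theta$ with the linear extension of the diagram symmetry $\alpha_i\mapsto\alpha_{\pi(i)}$ (legitimate, since $\chi$ is trivial on $\h^c$ and the Cartan matrix is $\pi$-invariant), you \emph{prove} that $s_{\alpha_k}\in W_\theta$ forces $\rho(\alpha_k)=\alpha_k$ via the fixed-space argument ($V^\perp\subseteq\alpha_k^\perp$, hence $\alpha_k\in V=\Span_\R(\Delta_\theta)$, on which $\rho$ is the identity) — a point the paper simply asserts as ``$\pi(k)=k$'' — and you finish with the standard conjugation identity $\rho\,s_{\alpha_k}\,\rho^{-1}=s_{\rho(\alpha_k)}$ (which needs only that $\rho$ is linear and preserves $\Phi$) together with $\rho(\Delta)=\Delta$. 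Your approach is uniform (no inner/outer split, no appeal to simple-lacedness) and slightly more careful about the hypothesis; what it does not deliver is the sign bookkeeping for $\theta$ on the new canonical generators, but that is not part of the statement, and the paper derives it separately after the lemma in any case.
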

\begin{proof}
This follows readily if $\theta$ is inner since then  $\varphi$ is the identity and $\alpha\circ\theta = \alpha$ for all $\alpha\in \Phi$. So suppose $\varphi$ is not the 
identity, hence $\Phi$ is simply laced, cf.\ \cite[Table 1]{onish}. Note that  $\pi(k)=k$, thus  $\theta(w(x_k)) = \theta(y_k)=\lambda_k y_{k} = \lambda_k w(x_{k})$, which shows $w(\alpha_k)\circ\theta=w(\alpha_{k})\in w(\Delta)$. If $j$ is
such that  $\langle \alpha_j,\alpha_k \rangle = -1$, then $w(\alpha_j)=\alpha_j+\alpha_k$ and $w(x_j) = x_{\alpha_k+\alpha_j}$. Since $\Phi$ is simply laced,  $N_{\alpha,\beta}=\pm 1$ for all $\alpha,\beta\in\Phi$, and $[x_k,x_j] = N_{\alpha_k,\alpha_j}x_{\alpha_k+\alpha_j}$ implies that 
$\theta(w(x_j)) = \pm  x_{\alpha_k+\alpha_{\pi(j)}}$. Since also $\langle \alpha_{\pi(j)},\alpha_k 
\rangle = -1$, we have $x_{\alpha_k+\alpha_{\pi(j)}} = w(x_{\pi(j)})$, hence $w(\alpha_j)\circ\theta=w(\alpha_{\pi(j)})\in w(\Delta)$. Analogously, if $\langle \alpha_j,\alpha_k\rangle =0$, then $w(x_j)=x_j$, hence $w(\alpha_j)\circ\theta=w(\alpha_{\pi(j)})\in w(\Delta)$.
\end{proof}

Suppose $\theta$ has parameters $\lambda_1,\ldots,\lambda_\ell$, that is, $\theta(x_i)=\lambda_i x_{\pi(i)}$ for all $i$, and let $w=s_{\alpha_k}\in W_\theta$. Clearly,  $\{w(h_i),w(x_i),w(y_i)\mid i\}$ is a canonical generating set, and we modify it as follows:  Whenever $\pi(i)>i$, we replace $w(x_{\pi(i)})$ and $w(y_{\pi(i)})$ by $\theta(w(x_i))$ and $\theta(w(y_i))$; let $\{\bar h_i,\bar x_i,\bar y_i\mid i\}$ be the resulting canonical generating set with corresponding basis of simple roots $w(\Delta)$, which still is $\theta$-stable by Lemma \ref{lemWStab}. By construction, if $\pi(i)\ne i$, then $\theta(\bar x_i)=\bar x_{\pi(i)}$. Now let $\pi(j)=j$ and recall that $w(\alpha_j)=\alpha_j-\langle \alpha_j,\alpha_k\rangle \alpha_k$ and  $\pi(k)=k$. A case distinction on the value of $\langle \alpha_j,\alpha_k\rangle$ shows that \[\theta(w(x_j))=\lambda_j\lambda_k^{\langle \alpha_j,\alpha_k\rangle} w(x_j).\] In conclusion, if we replace our original  canonical generators and basis of simple roots by their (modified) images under $w\in\Aut(\g^c)$, then for the parameters 
$\tilde\lambda_j$ of  $\theta$ we have  $\tilde\lambda_j=1$ if $\pi(j)\neq j$ and 
$\tilde\lambda_j=\lambda_j\lambda_k^{\langle \alpha_j,\alpha_k\rangle}$ if $\pi(j)=j$.

\subsection{Inner type}
First we suppose that $\g$ is of inner type, that is, $\fk$ contains a Cartan subalgebra $\h$ of $\g$. Let $\Phi$ be the  root system of $\g^c$ with respect to $\h^c$, with basis of simple roots $\Delta=\{\alpha_1,\ldots,\alpha_\ell\}$. Let $\{h_i,x_i,y_i\mid i\}$ be a canonical generating set corresponding to $\Delta$. If $\g'$ is not of inner type, that is, if a Cartan subalgebra of $\fk'$ is not a
Cartan subalgebra of $\g'$, then $\g$ and $\g'$ are not isomorphic. Hence, we assume that $\g'$ is of inner type and define $\h'$, $\Phi'$, and $\Delta'$ in the same way. Since $\g^c$ and $(\g')^c$ are isomorphic we may 
assume that $\Delta$ and $\Delta'$ are ordered so that the corresponding Cartan matrices are the 
same. Recall that each root space $\g_\alpha$ with $\alpha\in \Phi$ lies either in $\fk^c$ or $\fp^c$, thus we have $\theta(x_i)=\lambda_ix_i$ with $\lambda_i\in\{\pm 1\}$ for all $i$. 
Let $\lambda_1',\ldots,\lambda_\ell'$ be defined similarly.

Suppose that we are in the situation $\lambda_i=\lambda_i'$ for all $i$, and write $\sigma(x_i)=r_i y_i$ and $\sigma(x_i')=r_i' y_i'$. By Corollary \ref{corSign}, we have  $\sign(r_i)=\sign(r_i')$ for all $i$, which allows us to define the reals $\mu_i = \sqrt{r_i/r_i'}$. Now the isomorphism $\psi\colon \g^c \to (\g')^c$ which maps $(h_i,x_i,y_i)$ to $(h_i',\mu_i x_i',\mu_i^{-1}y_i')$ for  all $i$ satisfies $\psi\circ\theta=\theta'\circ\psi$ and $\psi\circ\sigma = \sigma'\circ\psi$, and we are done.

In the remainder of this section we show how to achieve  $\lambda_i=\lambda_i'$ for all $i$ in case that $\g$ and $\g'$ are 
isomorphic. The idea is to use the results of Section \ref{secWGA} to find a new basis of simple roots such that $\theta$ and its parameters $\lambda_1,\ldots,\lambda_\ell$ are in a \emph{standard form}. As explained below, this means that at most one parameter $\lambda_k$ is negative, with certain restrictions on $k$. The possible standard forms are obtained by listing the Kac diagrams of the inner involutions of $\g^c$; to each Kac diagram corresponds exactly one standard form, and $\g$ and $\g'$ are isomorphic if and only if  the standard forms of $\theta$ and
$\theta'$ coincide.

In the following we explain this in detail for the simple Lie algebra of type $D_\ell$.

\begin{example}\label{exaDl}
Let the notation be as above and suppose $\g^c$ is of type $D_\ell$ with $\ell>4$.  We suppose that our basis of simple roots  $\Delta = \{\alpha_1,\ldots,\alpha_\ell\}$ corresponds to the labels of the following Dynkin diagram of $D_\ell$: 
\begin{center}
  \begin{tikzpicture}[scale=0.5]\label{labDl}
    \foreach \x in {0,1,3}
    \draw[thick,xshift=\x cm] (\x cm,0) circle (2 mm);
    \draw[thick, xshift=0 cm, yshift=-7mm] (0 cm,0) node{1};
    \draw[thick, xshift=1 cm, yshift=-7mm] (1 cm,0) node{2};
    \draw[thick, xshift=3 cm, yshift=-7mm] (2.8 cm,0) node{$\ell-2$};
    \draw[thick,xshift=0 cm] (0 cm,0) ++(.3 cm, 0) -- +(14 mm,0);
    \draw[dotted, thick,xshift=1 cm] (1 cm,0) ++(.3 cm, 0) -- +(14 mm,0);
    \draw[dotted,thick] (4.4 cm,0) -- +(13mm,0);
    \draw[xshift=6 cm,thick] (30: 17 mm) circle (2mm);
    \draw[xshift=6 cm,thick] (-30: 17 mm) circle (2mm);
    \draw[thick,xshift=7.15 cm] (30: 19 mm) node{$\ell-1$};
    \draw[thick,xshift=6.6 cm] (-30: 18 mm) node{$\ell$};
    \draw[xshift=6 cm,thick] (30: 3 mm) -- (30: 14 mm);
    \draw[xshift=6 cm,thick] (-30: 3 mm) -- (-30: 14 mm);
  \end{tikzpicture}
\end{center}
Up to conjugacy, the  involutive inner automorphisms of $\g^c$ are $\bar\chi_j$ with $j=1,\ldots,\lfloor\tfrac{\ell}{2}\rfloor$ or $j=\ell-1$, where  $\bar\chi_j(x_i) = (-1)^{\delta_{ij}}x_i$ for all $i$. If we do not have that $\theta=\bar\chi_j$ for some $j$, then we proceed as follows. Recall that the parameters of $\theta$ are $\lambda_1,\ldots,\lambda_\ell$ where $\theta(x_i)=\lambda_i x_i$.  For $k=1,\ldots,\ell$ write $w_k=s_{\alpha_k}\in W$ for the reflection defined by $k$-th simple root $\alpha_k$. Let $\tilde x_i = w_k(x_i)$, $\tilde y_i = w_k(y_i)$, and
$\tilde h_i = w_k(h_i)$ be the images of the canonical generators under $w_k$. As seen in 
Section \ref{secWGA}, with respect to this new canonical generating set, $\theta$ has the same  
parameters as before, except that $\lambda_j$ is replaced by $\lambda_j\lambda_k$ if $\langle \alpha_j,
\alpha_k\rangle = -1$ (or, equivalently, if $\alpha_j$ and $\alpha_k$ are connected in the Dynkin diagram).  We will now iterate this modification of parameters. We stress that in each iteration step the reflections $w_1,\ldots,w_\ell$ are defined with respect to the new basis of simple roots constructed in the previous step; thus, acting with $w_i$ and then with $w_j$ means we first apply the reflection $s_{\alpha_i}$ and then the reflection $s_{w_i(\alpha_j)}$. Similarly, in each iteration step we have new parameters $\lambda_i$ and a new canonical generating set. By abuse of notation, in each iteration step we always denote these by the same symbols.

We now show that we can apply a sequence of simple reflections to find a new set of canonical
generators  such that for the parameters of $\theta$ there is a unique $k\in \{1,\ldots, \lfloor\tfrac{\ell}{2}\rfloor, \ell-1,\ell\}$ with $\lambda_k=-1$, that is, $\theta=\bar\chi_k$. The details are as follows:
\begin{items}
\item[$\bullet$] The first step is to achieve that at most one of $\lambda_1,\ldots,\lambda_{\ell-2}$ has value $-1$. If this is not already the case, then there exist $i<k\leq \ell-2$ with $\lambda_i,\lambda_k=-1$ and $\lambda_{j}=1$ whenever  $i<j<k$ or $k<j\leq \ell-2$.  If we act with $w_i,w_{i+1},\ldots,w_{k-1}$, then we obtain new parameters of $\theta$ with  $\lambda_{k-1}=-1$ and $\lambda_k=\ldots=\lambda_{\ell-2}=1$. Now either $k-1$ is the only index in $\{1,\ldots,\ell-2\}$ with $\lambda_{k-1}=-1$, or we iterate this process. Eventually,  at most one value of $\lambda_1,\ldots,\lambda_{\ell-2}$ is $-1$. 
\item[$\bullet$] Next, a case distinction with four cases $\lambda_{\ell-1},\lambda_\ell\in\{\pm1\}$ shows that we can in fact assume that at most one value of  $\lambda_1,\ldots,\lambda_{\ell}$ is $-1$: For example, suppose $\lambda_i=-1$ with $i<\ell-1$ and $\lambda_{\ell-1}=-1$ are the only negative parameters. If we act with $w_{\ell-1},w_{\ell-2},\ldots,w_{i+1}$, then among the new parameters the only negative ones are $\lambda_{i+1}=-1$ and $\lambda_{\ell}=-1$. By an iteration, the only negative parameters are $\lambda_{\ell-2}=\lambda_{\ell}=-1$ (or  $\lambda_{\ell-2}=\lambda_{\ell-1}=-1$), and acting with $w_\ell$ (or $w_{\ell-1}$) yields the assertion. 
\item[$\bullet$]If the only negative parameter $\lambda_k=-1$ satisfies $k\in \{1,\ldots, \lfloor\tfrac{\ell}{2}\rfloor, \ell-1,\ell\}$, then we are done. Thus, suppose we have $\lambda_k=-1$ with $\lfloor\tfrac{\ell}{2}\rfloor< k\leq \ell-2$. Let $t=\ell-k-1$, and act with $w_k,w_{k+1},\ldots,w_{k+t};\;w_{k-1},w_k,\ldots,w_{k-1+t};\;\ldots;\; w_1,w_2,\ldots,w_{1+t}$. This gives new parameters with only negative parameter $\lambda_{t+1}=-1$.
\end{items}
If $\lambda_\ell=-1$ is the only negative parameter, then we apply the diagram automorphism which fixes $\alpha_1,\ldots,\alpha_{\ell-1}$ and interchanges $\alpha_{\ell-1}$ and $\alpha_\ell$; the resulting new basis of simple roots still defines the same Cartan matrix, and now we have $\theta=\bar\chi_{\ell-1}$. 
Thus, every inner automorphisms $\theta$ of order two can be brought into \emph{standard form}, that is, there is exactly one negative parameter $\lambda_k=-1$, and  $k\in \{1,\ldots, \lfloor\tfrac{\ell}{2}\rfloor, \ell-1\}$. 
\end{example}

Our approach for the other simple Lie algebras is the same: We act with the Weyl group (as described in Section \ref{secWGA}) and certain diagram automorphisms to find a new basis of simple roots such that   $\theta$ has standard form, that is, at most one parameter $\lambda_k=-1$ is negative, with the following  restrictions: $k\leq \lceil \ell/2\rceil$ for $A_\ell$, $k=\ell$ or $k\leq \lfloor \ell/2\rfloor$ for $C_\ell$, $k=1$ for $G_2$, $k\in\{2,3\}$ for $F_4$, $k\in\{1,2\}$ for $E_6$, $k\in\{1,2,7\}$ for $E_7$, and $k\in\{1,8\}$ for $E_8$.

\begin{remark} 
A more uniform approach to the problem of finding the standard form of $\theta$ is by 
using the classification of finite order inner automorphisms as, for example, given in 
\cite{reed}. In this approach one acts with the affine Weyl group, and finding the Kac diagram of an automorphism is equivalent to finding a point in the fundamental
alcove conjugate to a given point. It can be worked out how acting by an element of the affine
Weyl group amounts to choosing a different basis of simple roots. For the purposes of 
this paper, as we are dealing with involutions only, we have chosen the more elementary 
method outlined above. 
\end{remark}

\subsection{Outer type}\label{secIsomOT}

Suppose $\theta$ is an outer involutive automorphism of $\g^c$. We apply the following four steps to $\g$ (and then $\g'$).
\begin{items}
\item[1)] The first step is to construct a $\theta$-stable Cartan subalgebra of $\g^c$: For this purpose let $\h_0^c$ be a Cartan subalgebra of $\fk^c$ and define $\h^c=C_{\g^c}(\h_0^c)$ as its centraliser in $\g^c$. It is shown in \cite[Proposition 6.60]{knapp} that $\h^c$ is a Cartan subalgebra of $\g^c$; clearly, it is fixed by $\theta$. Now $\h=\h^c\cap \g$ is a \emph{maximally compact} Cartan subalgebra of $\g$, see \cite[Proposition 6.61]{knapp}, and all Cartan subalgebras of $\g$ constructed this way are conjugate in $\g$, see \cite[Proposition 6.61]{knapp}. Thus, if $\g$ and $\g'$ are isomorphic and $\h$ and $\h'$ are Cartan subalgebras constructed as above, then there is an isomorphism $\g\to \g'$ which maps $\h$ to $\h'$.

\item[2)]The second step is to construct a basis of simple roots which is stable under $\alpha\mapsto\alpha\circ\theta$: Let $\Phi$ be the root system with respect to $\h^c$, and recall that if $\alpha\in \Phi$, then   $\beta = \alpha\circ\theta$ is a root with $\theta(x_\alpha) \in \g_\beta$ and $\theta(h_\alpha)=h_{\beta}$. This shows that the $\R$-span $\h_\R$  of all $h_\alpha$ with $\alpha\in\Phi$ is invariant under $\theta$. Moreover, $h_{0,\R}=\h_\R\cap \h_0^c$ is the 1-eigenspace of $\theta$ in $\h_\R$. Since $\h_{0,\R}$ spans $\h_0^c$ as a $\C$-vector space, the restriction of each $\alpha\in\Phi$ to $\h_{0,\R}$ is non-zero: if $\alpha(\h_{0,\R})=\{0\}$, then $\g_\alpha\subseteq C_{\g^c}(\h_0^c)=\h^c$ yields a contradiction. This shows  that there is $h_0\in \h_{0,\R}$ with $\alpha(h_0)\ne 0$ for all $\alpha\in\Phi$: such an $h_0$ can be chosen as any element outside a finite number of hyperplanes in $\h_{0,\R}$, namely, the kernels of $\alpha$ in $h_{0,\R}$. We use $h_0$ to define $\alpha>0$ if and only if $\alpha(h_0)>0$; note that elements in $\h_{\R}$ only have real eigenvalues. It is easy to check that this defines a root ordering, and, if $\alpha>0$, then also $\alpha\circ\theta>0$. Therefore the corresponding set of simples roots  $\Delta = \{\alpha_1,\ldots,\alpha_\ell\}$ is $\theta$-stable.  Let $\pi$ be the permutation of $\{1,\ldots,\ell\}$ defined by $\alpha_i\circ\theta=\alpha_{\pi(i)}$, and denote by  $\{h_i,x_i,y_i\mid i\}$ a canonical generating set corresponding to $\Delta$.

\item[3)]The third step is to adjust the coefficients of $\theta$: If $\pi(i)=i$, then set  $\tilde{h}_i=h_i$, $\tilde{x}_i=x_i$ and $\tilde{y}_i=y_i$. Otherwise, for all $(i,\pi(i))$ with $\pi(i)>i$ set  $\tilde{h}_i=h_i$, $\tilde{x}_i=x_i$, $\tilde{y}_i=y_i$,  and  $\tilde{h}_{\pi(i)}=\theta(h_i)$, $\tilde{x}_{\pi(i)}=\theta(x_i)$, $\tilde{y}_{\pi(i)}=\theta(y_i)$. By replacing  $\{h_i,x_i,y_i\mid i\}$  with the canonical generating set $\{\tilde{h}_i,\tilde{x}_i,\tilde{y}_i\mid i\}$, cf.\ Proposition \ref{prop:cangens}, we may assume that $\theta(x_i)=\lambda_i x_{\pi(i)}$  with $\lambda_i=1$ if $\pi(i)\neq i$.

\item[4)]Finally, we decompose $\theta$: Let $\varphi$ be the diagram automorphism defined by $\pi$ with respect to the new canonical generating set defined in 3), cf.\ Section \ref{secRF}, that is, $\varphi(x_i)=x_{\pi(i)}$, $\varphi(y_i)=y_{\pi(i)}$, and $\varphi(h_i)=h_{\pi(i)}$ for all $i$. By construction, \[\chi=\varphi\circ \theta=\theta\circ\varphi\] is an involutive inner automorphism of $\g^c$ with  $\chi(x_i)=x_i$ if $\pi(i)\ne i$, and   $\chi(x_i)=\lambda_i x_i$ if $\pi(i)=i$ and $\theta(x_i)=\lambda_i x_i$; clearly, $\lambda_i=\pm 1$. The analogous statement holds for $y_i$.
\end{items}
We use the same procedure to construct a $\theta'$-stable set of positive roots $\Delta'$, and automorphisms $\varphi'$ and $\chi'$ of $(\g')^c$. We also assume that the bases $\Delta$ and $\Delta'$ are ordered such that the corresponding Cartan matrices are the same, and $\pi=\pi'$ as permutations of $\{1,\ldots,\ell\}$. If the latter is not possible, then $\g^c$ and $(\g')^c$ are not isomorphic.  Let $\{h_i,x_i,y_i\mid i\}$ and $\{h_i',x_i',y_i'\mid i\}$ be the associated sets of canonical generators as constructed in Step 3) above, and let $\psi\colon \g^c\to (\g')^c$ be the associated isomorphism. We now try to modify $\psi$ so that it is compatible with $\theta,\theta'$ and $\sigma,\sigma'$.\medskip

\subsubsection{Make $\psi$ compatible with $\theta$ and $\theta'$} Recall that  $\theta=\chi\circ \varphi$, $\theta'=\chi'\circ\varphi'$, and $\psi\circ \varphi = \varphi'\circ \psi$. If $\pi(i)\ne i$, then \[\theta'\circ \psi(x_i)=\theta'(x'_i)=x'_{\pi'(i)}=x'_{\pi(i)}=\psi(x_{\pi(i)})=\psi\circ \theta(x_i);\] similarly, $\theta'\circ \psi$ and $\psi\circ \theta$ coincide on the whole subspace of $\g^c$ spanned by all $x_i, y_i, h_i$ with $\pi(i)\ne i$. If $\pi(i)=i$ with $\theta(x_i)=\lambda_i x_i$ and $\theta'(x'_i)=\lambda'_i x'_i$, then $\theta'\circ\psi(x_i)=\psi\circ \theta(x_i)$ if and only if $\lambda_i=\lambda_i'$. 

\begin{items}
\bullit Type $A_\ell$: If $\ell=2m$ is even, then $\pi$ acts fixed point freely, thus  $\psi$ as constructed above already satisfies $\theta'\circ \psi=\psi\circ \theta$. If $\ell=2m+1$, then $\pi$ has exactly one fixed point, say $i=1$, and either $\lambda_{1}=1$ or $\lambda_{1}=-1$. On the other hand, up to conjugacy, $A_\ell$ has two outer automorphisms, so each choice for $\lambda_1$ corresponds to a different conjugacy class of automorphisms. Thus, if $\g^c$ and $(\g')^c$ are isomorphic, then $\lambda_{1}=\lambda_{1}'$, and $\psi$ is an isomorphism with $\theta'\circ \psi=\psi\circ \theta$. 
\bullit Type $E_6$:
Here $\pi$ has two fixed points, say $i=2,4$, thus there are four possible combinations of signs for $\lambda_2$ and $\lambda_4$. However, up to conjugacy, $E_6$ has two outer automorphisms. Suppose our root basis $\Delta=\{\alpha_1,\ldots,\alpha_6\}$ corresponds to the labels of the following Dynkin diagram of $E_6$:

\begin{center}
  \begin{tikzpicture}[scale=.4]
    \foreach \x in {0,...,4}
    \draw[thick,xshift=\x cm] (\x cm,0) circle (2 mm);
   
    \draw[thick, xshift=0 cm, yshift=-7mm] (0 cm,0) node{1};
    \draw[thick, xshift=1 cm, yshift=-7mm] (1 cm,0) node{3};
    \draw[thick, xshift=2 cm, yshift=-7mm] (2 cm,0) node{4};
    \draw[thick, xshift=3 cm, yshift=-7mm] (3 cm,0) node{5};
    \draw[thick, xshift=4 cm, yshift=-7mm] (4 cm,0) node{6};
    \foreach \y in {0,...,3}
    \draw[thick,xshift=\y cm] (\y cm,0) ++(.3 cm, 0) -- +(14 mm,0);
    \draw[thick] (4 cm,2 cm) circle (2 mm);
    \draw[thick] (4.6 cm,2 cm) node{2};
    \draw[thick] (4 cm, 3mm) -- +(0, 1.4 cm);
  \end{tikzpicture}
\end{center}
Up to conjugacy, $E_6$ has two outer automorphisms $\varphi\circ\bar\chi$, where $\varphi$ is the diagram automorphism acting via the permutation $\pi=(1,6)(3,5)$, and $\bar\chi$ is an inner automorphisms which satisfies $\bar\chi(x_4)=\pm x_4$ and $\bar\chi(x_i)=x_i$ if $i\ne 4$. 
As outlined in Section \ref{secWGA} we now act with $w_2=s_{\alpha_2}$ and $w_4=s_{\alpha_4}$ in order
to find a new canonical generating set (with respect to  a new basis of simple roots), relative 
to which we have $\lambda_2=\lambda_4=1$, or $\lambda_2=1$ and $\lambda_4 = -1$. It is 
straightforward to see that this can always be done. For example, if $\lambda_2=-1$ and
$\lambda_4=1$, then we first act with $w_2$ to get $\lambda_2=\lambda_4=-1$ and subsequently
with $w_4$ to get $\lambda_2=1$ and $\lambda_4 = -1$. Finally we use the same trick as
in the beginning of Section \ref{secIsomOT} to obtain $\lambda_i=1$ 
for all $i\ne 2,4$ (that is, we set $x_5 = \theta(x_3)$ etc.). 
The conclusion is that we can arrange  that 
$\lambda_i=\lambda_i'$ for every $i$, hence $\psi$ is  an isomorphism with 
$\theta'\circ \psi=\psi\circ \theta$. 

\bullit Type $D_\ell$: We proceed as for $E_6$ and suppose that our basis of simple roots $\Delta$ corresponds to the Dynkin diagram of $D_\ell$ as shown on page \pageref{labDl}. Up to conjugacy, the  involutive outer automorphisms of $D_\ell$ are $\varphi\circ \bar\chi$, where $\varphi$ is the diagram automorphism defined by $\pi=(\ell-1,\ell)$, and $\bar\chi$ is an
inner automorphism with $\bar\chi(x_i) = \bar\lambda_i x_i$, where either $\bar\lambda_i=1$ for all $i$, or there exists a unique negative $\bar\lambda_k$ and $k\in\{1,\ldots, \lceil\tfrac{\ell}{2}\rceil-1\}$. As in Example \ref{exaDl}, we act with reflections $s_{\alpha_j}\in W$,  $j\in\{1,\ldots,\ell-2\}$, to find a canonical generating set relative to which there is a unique negative parameter $\lambda_k$, and $k\in \{1,\ldots,\ell-2\}$. If  $k\leq \lceil \tfrac{\ell}{2} \rceil -1$, then we are done; otherwise we proceed as follows. Set $\beta_i = \alpha_{\ell-i-1}$ for $i=1,\ldots,\ell-2$, and $\beta_{\ell-1} = -\alpha_1-
\cdots - \alpha_{\ell-2}-\alpha_{\ell-1}$,  and $\beta_{\ell} = -\alpha_1-
\cdots - \alpha_{\ell-2}-\alpha_{\ell}$. Then $\bar\Delta = \{\beta_1,\ldots,\beta_\ell\}$ is also
a basis of simple roots with the same Dynkin diagram. Now we take a canonical generating set
with respect to $\bar\Delta$.
With respect to this new canonical generating set, $\chi$ has a unique negative parameter 
$\lambda_k=-1$, and $k\in\{1,\ldots,\lceil \tfrac{\ell}{2}\rceil -1\}$. 
\end{items}

\noindent Using these constructions, we can arrange that  $\lambda_i=\lambda_i'$ for all $i$, thus the corresponding isomorphism $\psi$ (defined on the newly constructed canonical generating sets) is compatible with $\theta$ and $\theta'$.

\medskip

\subsubsection{Make $\psi$ compatible with $\sigma$ and $\sigma'$}
Using the construction in the previous paragraphs, we have established that  either $\g$ and 
$\g'$ are not isomorphic, or we have an isomorphism $\psi\colon \g^c\to (\g')^c$
compatible with $\theta$ and $\theta'$. We assume the latter holds, and we now  adjust 
$\psi$ so it is also compatible with the complex conjugations $\sigma$ and $\sigma'$; this  yields the desired isomorphism between $\g$ and $\g'$.

By our previous construction, if $i\ne \pi(i)$, then $\theta(x_i)=x_{\pi(i)}$, and $\theta(x_i)=\lambda_i x_i$ otherwise. Lemma \ref{lemSigma} shows that $\sigma(x_i)=r_i y_{\pi(i)}$ for some $r_i\in\R$ with $r_i=r_{\pi(i)}$. If $i\ne \pi(i)$, then $r_i<0$, and, if $i=\pi(i)$, then $r_i$ and $-\lambda_i$ have the same sign, see Corollary \ref{corSign}.  Now define  define $\mu_i = \sqrt{1/ |r_i|}$ for $i=1,\ldots,\ell$. If we replace $x_i,y_i,x_{\pi(i)},y_{\pi(i)}$ by  $\tilde x_i = \mu_i x_i$, $\tilde y_i = \mu_i^{-1} y_i$,
$\tilde x_{\pi(i)} = \mu_i x_{\pi(i)}$, $\tilde y_{\pi(i)} = \mu_i^{-1} y_{\pi(i)}$, then  we get a new set of canonical
generators where $\theta$ acts in the same way and $\sigma(\tilde x_i) = \pm \tilde y_{\pi(i)}$ for all $i$. In a similar way, we obtain a new set of canonical generators $\{\tilde x_i',\tilde y_i',h_i'\mid i\}$ of $(\g')^c$; recall that $\lambda_i=\lambda_i'$ for all $i$. The associated isomorphism $\g^c\to (\g')^c$ now is compatible with $\theta$, $\sigma$, and $\theta'$, $\sigma'$, and restricts to an isomorphism $\g\to \g'$ preserving the Cartan decompositions.

\begin{remark}
In the algorithms described in this section we compute  root systems of $\g^c$ and $(\g')^c$ with respect to
Cartan subalgebras $\h^c$ and $(\h')^c$. In order for that to work well we
need Cartan subalgebras that split over $\SqrtField(\imath)$ (or an extension thereof
of small degree). However, the problem of finding such Cartan subalgebras is very
difficult, cf.\ \cite{irs}. Therefore, in our algorithms we assume that we have a 
Cartan subalgebra with a small splitting field. 
\end{remark}

\subsection{Nilpotent orbits under isomorphisms}
Suppose $\g=\fk\oplus\fp$ and $\g'=\fk'\oplus\fp'$ are semisimple real Lie algebras and $\psi\colon \g\to \g'$ is an isomorphism such that $\psi(\fk)=\fk'$ and $\psi(\fp)=\fp'$. As described in the previous sections, we can extend this to an isomorphism $\psi\colon \g^c\to (\g)^c$ compatible with the corresponding Cartan involutions $\theta,\theta'$ and complex conjugations $\sigma,\sigma'$. Let $G$  be the connected Lie subgroup of the adjoint group $G^c$ of $\g^c$, having Lie algebra $\g$. Similarly, let $G'$ be defined for $\g'$.

\begin{lemma}
The isomorphism $\psi\colon \g\to \g'$ maps nilpotent orbits to nilpotent orbits.
\end{lemma}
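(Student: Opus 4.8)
The statement is essentially immediate once we recall two facts: nilpotency is preserved by any Lie algebra isomorphism, and the orbits in question are $G$-orbits, where $G$ is the connected subgroup of the adjoint group with Lie algebra $\g$. The plan is to transport the group $G$ along $\psi$ and identify its image with $G'$, then observe that $\psi$ intertwines the adjoint actions.

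First I would argue on the level of Lie algebras: if $x\in\g$ is nilpotent, i.e.\ $\ad_\g(x)$ is a nilpotent endomorphism of $\g$, then since $\psi\colon\g\to\g'$ is an isomorphism we have $\ad_{\g'}(\psi(x))=\psi\circ\ad_\g(x)\circ\psi^{-1}$, which is again nilpotent; hence $\psi$ sends nilpotent elements to nilpotent elements, and likewise $\psi^{-1}$ does, so $\psi$ restricts to a bijection between the nilpotent cones of $\g$ and $\g'$. Next I would handle the group action. The isomorphism $\psi\colon\g^c\to(\g')^c$ induces an isomorphism $\Psi\colon G^c\to (G')^c$ of adjoint groups with $\Psi(\exp\ad(y))=\exp\ad(\psi(y))$ for $y\in\g^c$, and this conjugation relation $\Psi(g)\cdot\psi(z)=\psi(g\cdot z)$ holds for all $g\in G^c$, $z\in\g^c$ (check it on one-parameter subgroups $g=\exp(t\,\ad y)$ and extend). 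Since $\psi(\g)=\g'$, the subgroup $\Psi(G)$ is a connected subgroup of $(G')^c$ with Lie algebra $\psi(\g)=\g'$; by uniqueness of the connected subgroup attached to a given subalgebra, $\Psi(G)=G'$.

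Combining the two observations: for a nilpotent $x\in\g$ with $G$-orbit $x^G=\{g\cdot x\mid g\in G\}$, the intertwining relation gives
\[
\psi(x^G)=\{\psi(g\cdot x)\mid g\in G\}=\{\Psi(g)\cdot\psi(x)\mid g\in G\}=\{g'\cdot\psi(x)\mid g'\in G'\}=\psi(x)^{G'},
\]
and $\psi(x)$ is nilpotent in $\g'$ by the first paragraph. Hence $\psi$ carries the nilpotent $G$-orbit of $x$ onto the nilpotent $G'$-orbit of $\psi(x)$; applying the same reasoning to $\psi^{-1}$ shows the induced map on orbit sets is a bijection.

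The only genuinely non-trivial point — and thus the main thing to be careful about — is the identification $\Psi(G)=G'$, i.e.\ that transporting the connected adjoint-type subgroup along $\psi$ really lands on the group $G'$ specified in the statement, rather than merely on \emph{some} group integrating $\g'$. This is where the hypothesis that $\psi$ respects the real structures is implicitly doing its work: $\g'=\psi(\g)$ is exactly the fixed-point set of $\sigma'$ inside $(\g')^c$, so $\Psi(G)$ is forced to be the connected subgroup with Lie algebra $\g'$, which is $G'$ by definition. Everything else is the standard functoriality of $\exp$ and $\ad$ under Lie algebra isomorphisms, and I would keep that part brief.
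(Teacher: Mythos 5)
Your proposal is correct and follows essentially the same route as the paper: both transport the group action by conjugating with $\psi$, i.e.\ lift $\psi$ to a group isomorphism $\beta\mapsto\psi\circ\beta\circ\psi^{-1}$ intertwining the adjoint actions, after noting that nilpotency is preserved. The only cosmetic difference is how the image of $G$ is identified with $G'$: you invoke uniqueness of the connected subgroup with Lie algebra $\g'$ inside $(G')^c$, while the paper cites the fact that $G$ is generated by the elements $\exp\ad x$, $x\in\g$ (and, as you correctly observe, only $\psi(\g)=\g'$ is needed here, not the compatibility with $\sigma$ and $\theta$).
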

\begin{proof}
Clearly, $e\in\g$ is nilpotent if and only if $\psi(e)\in\g'$ is nilpotent. We show that if two nilpotent  $e,f\in\g$ are conjugate under $G$, then $e'=\psi(e)$ and $f'=\psi(f)$ are conjugate under $G'$; then the same argument with $\psi$ replaced by $\psi^{-1}$ proves the assertion. As shown in \cite[pp.\ 126--127]{helgason}, the adjoint group $G$ is generated by all $\exp\ad x$ with $x\in \g$, and the isomorphism $\psi$ lifts to an isomorphism $\tilde\psi\colon G\to G'$, $\beta\mapsto \psi\circ\beta\circ\psi^{-1}$. Thus, if  $\beta(e)=f$ for some $\beta\in G$, then $\tilde\psi(\beta)(\psi(e))=\psi(f)$, and $\psi(e)$ and $\psi(f)$ are $G'$-conjugate in $\g'$.
\end{proof}

\begin{appendix}

\section{Comment on the implementation}\label{secSF}
For computing with semisimple Lie algebras we use the package {\tt SLA} \cite{sla} for the computer algebra system {\tt GAP} \cite{gap}. This package provides the functionality, for example,  to computer Chevalley bases, canonical generators, and involutive automorphisms. In Section \ref{secNonPrincCase} we use the Gr\"obner bases functionality of the computer algebra system {\sc Singular} \cite{singular} via the linkage package  {\tt Singular} \cite{singularGap}.

\subsection{The field $\SqrtField$}
\noindent We now comment on the field $\SqrtField=\Q(\{\sqrt{p}\mid p\textrm{ a prime}\})$.   {\tt GAP}  already allows us to work with subfields of cyclotomic fields $\Q(\zeta_n)$, where $\zeta_n$ is a complex primitive $n$-th root of unity. However, if $x=\sum_{i=1}^m\sqrt{p_i}$ for primes $p_1,\ldots,p_m$, then the smallest $n$ with $x\in\Q(\zeta_n)$ is $n=\lcm(e_1,\ldots,e_m)$ where $e_i=p_i$ if $p_i\equiv 1\bmod 4$, and $e_i=4p_i$ otherwise, cf.\ Lemma \ref{lemPCyc}. Thus, already for small $m$ this requires to work in large cyclotomic fields. Alternatively, one could work in an algebraic extension defined by an irreducible polynomial over $\Q$. The disadvantage here is that we do not know in the beginning which irrationals  turn up, so we would have to extend and therefore change the underlying field several times. To avoid all this, we have implemented our own realisation of $\SqrtField(\imath)$. Every element of $\SqrtField(\imath)$ can be written uniquely as $u=\sum_{i=1}^n r_i \sqrt{z_i}$ where $z_i>0$ are pairwise distinct squarefree integers and $r_i\in\Q(\imath)$. Internally, we represent $u$ as a list with entries $(r_i,z_i)$, which allows efficient addition and multiplication in $\SqrtField(\imath)$. A computational bottleneck is the construction of the multiplicative inverse of such an $u\ne 0$: We compute powers $\{1,u,u^2,\ldots,u^m\}$ until $u^m$ can be expressed as a $\Q$-linear combination of $\{1,u,\ldots,u^{m-1}\}$, say $u^m=\sum_{i=0}^{m-1} q_i u^i$. The minimal polynomial of $u$ over $\Q$ is $f(x)=x^m-\sum_{i=0}^{m-1}q_ix^i=xg(x)+q_0$, therefore $u^{-1}=-g(u)/q_0$. Although all this can done with linear algebra,  $m$ can become rather large, cf.\ Lemma \ref{lemAllKi}.

Often we had to deal with the following problem: Suppose $v\in\SqrtField(\imath)$ is given as an element of  $\Q(\zeta_n)$ for some $n$,  write it as an element of $\SqrtField(\imath)$, that is, $v=\sum_{i=1}^m r_i\sqrt{k_i}$ for pairwise distinct positive squarefree integers $k_i$ and Gaussian rationals $r_i$.  Clearly, it is sufficient to consider $v$ real. The first step is to determine the set $\mathcal{S}$ of all positive squarefree $k$ with $\sqrt{k}\in\Q(\zeta_n)$; we do this in Corollary \ref{lemSQCyc}. The second step is to  make the ansatz $v=\sum_{k\in\mathcal{S}} r_k \sqrt{k}$ in $\Q(\zeta_n)$ with indeterminates $r_k\in\Q$. Linear algebra can be used to find a solution of this equation; we prove in Lemma \ref{lemAllKi} that such a solution always exists.  We now provide the theoretical background of this approach; our starting point is the following lemma, see \cite[p.\ 56, Proposition 3]{shirali}, and its corollary. 

\begin{lemma}\label{lemPCyc}
If $p$ is an odd prime, then $\sqrt{(-1)^{(p-1)/2}p}\in \Q(\zeta_p)$. 
\end{lemma}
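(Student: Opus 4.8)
The plan is to exhibit a Gauss sum living in $\Q(\zeta_p)$ whose square is $(-1)^{(p-1)/2}p$. Concretely, I would set $g=\sum_{a=1}^{p-1}\left(\frac{a}{p}\right)\zeta_p^a$, where $\left(\frac{a}{p}\right)$ denotes the Legendre symbol; this element manifestly lies in $\Q(\zeta_p)$ since each $\zeta_p^a$ does and the coefficients are integers. The claim then reduces to the classical identity $g^2=(-1)^{(p-1)/2}p$, which immediately gives $\sqrt{(-1)^{(p-1)/2}p}=\pm g\in\Q(\zeta_p)$.

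First I would record the standard fact that for $c\not\equiv 0\pmod p$ one has $\sum_{a=1}^{p-1}\left(\frac{a}{p}\right)\zeta_p^{ac}=\left(\frac{c}{p}\right)g$ (substitute $a\mapsto ac^{-1}$ and use multiplicativity of the Legendre symbol), while for $c\equiv 0$ the sum is $\sum_{a}\left(\frac{a}{p}\right)=0$. Then I would compute
\[
g^2=\sum_{a,b}\left(\tfrac{a}{p}\right)\left(\tfrac{b}{p}\right)\zeta_p^{a+b}
=\sum_{a,c}\left(\tfrac{c}{p}\right)\zeta_p^{a(1+c)},
\]
using the change of variable $b=ac$ (valid as $a$ ranges over units). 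Splitting off the term $c\equiv -1\pmod p$, which contributes $\left(\frac{-1}{p}\right)(p-1)$, and summing the geometric series $\sum_{a=1}^{p-1}\zeta_p^{a(1+c)}=-1$ for $c\not\equiv-1$, the remaining contribution is $-\sum_{c\neq -1}\left(\frac{c}{p}\right)=-\left(0-\left(\frac{-1}{p}\right)\right)=\left(\frac{-1}{p}\right)$. Hence $g^2=\left(\frac{-1}{p}\right)\bigl((p-1)+1\bigr)=\left(\frac{-1}{p}\right)p$, and Euler's criterion $\left(\frac{-1}{p}\right)=(-1)^{(p-1)/2}$ finishes the argument.

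The only genuinely delicate point is the bookkeeping in the double sum, in particular keeping track of the term where $1+c\equiv 0\pmod p$ separately from the geometric-series terms; everything else is routine. An alternative, if one prefers to avoid Gauss sums entirely, is to invoke the known ramification and Galois structure of $\Q(\zeta_p)/\Q$: the unique quadratic subfield is $\Q(\sqrt{p^*})$ with $p^*=(-1)^{(p-1)/2}p$, since $p$ is the only ramified prime and the sign is forced by the fact that $\Q(\sqrt{p^*})$ must be ramified only at $p$ (the discriminant of $\Q(\sqrt{d})$ is odd iff $d\equiv1\pmod4$). I would use the Gauss-sum computation as the primary proof since it is self-contained and constructive, which matches the algorithmic spirit of the paper.
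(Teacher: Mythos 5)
Your Gauss-sum computation is correct: the evaluation $g^2=\left(\tfrac{-1}{p}\right)p$ via the substitution $b=ac$, the separate treatment of the $c\equiv-1$ term, and Euler's criterion are all handled properly, so $\pm g$ is the required square root in $\Q(\zeta_p)$. The paper itself offers no proof of this lemma, only a citation to Shirali's book, and your argument is precisely the standard quadratic Gauss-sum proof underlying that reference, so there is nothing to reconcile.
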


\begin{corollary}\label{lemSQCyc}Let $k$ and $n$ be positive integers. Suppose $k$ is squarefree and let $e$ be the number of primes $p\equiv 3\bmod 4$ dividing $k$.
\begin{ithm}
\item If $\sqrt{2}\in\Q(\zeta_n)$, then $8\mid n$. If $\sqrt{k}\in\Q(\zeta_n)$, then $k\mid n$.
\item If $n$ is odd, then $\Q(\zeta_n)=\Q(\zeta_{2n})$, and $\sqrt{k}\in\Q(\zeta_n)$ if and only if $e$ is even and $k\mid n$.
\item If $4\mid n$ and $8\nmid n$, then $\sqrt{k}\in\Q(\zeta_n)$ if and only if $k$ is odd and $k\mid n$.
\item If $8\mid n$, then $\sqrt{k}\in Q(\zeta_n)$ if and only if $k\mid n$.
\item Let $n$ be minimal with $\sqrt{k}\in\Q(\zeta_n)$. If $k$ is odd and $e$ is even, then  $n=k$, and $n=4k$ otherwise.
\end{ithm}
\end{corollary}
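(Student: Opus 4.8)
The plan is to reduce the whole statement to one clean description of the quadratic subfields of $\Q(\zeta_N)$ and then read off (a)--(e) by bookkeeping on prime factorisations. For an odd prime $p$ write $p^\ast=(-1)^{(p-1)/2}p$, so Lemma~\ref{lemPCyc} reads $\sqrt{p^\ast}\in\Q(\zeta_p)$. Regard squarefree integers as elements of $\Q^\times/(\Q^\times)^2$, which has $\mathbb{F}_2$-basis $\{-1\}\cup\{\text{rational primes}\}$. For a positive integer $N$, let $H_N\leq\Q^\times/(\Q^\times)^2$ be the subgroup generated by the classes of $p^\ast$ for the odd primes $p\mid N$, together with $-1$ if $4\mid N$ and $2$ if $8\mid N$. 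The key claim to establish is
\[\Q(\sqrt d)\subseteq\Q(\zeta_N)\iff d\in H_N.\]

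For ``$\Leftarrow$'' I would use $\sqrt{-1}=\zeta_4$, $\sqrt2=\zeta_8+\zeta_8^{-1}$, Lemma~\ref{lemPCyc}, the inclusions $\Q(\zeta_p),\Q(\zeta_4),\Q(\zeta_8)\subseteq\Q(\zeta_N)$ for the relevant $N$, and $\Q(\sqrt{d_1d_2})\subseteq\Q(\sqrt{d_1},\sqrt{d_2})$: every generator of $H_N$, hence every element of $H_N$, yields a quadratic subfield of $\Q(\zeta_N)$. For ``$\Rightarrow$'' I would count both sides. Since $\mathrm{Gal}(\Q(\zeta_N)/\Q)\cong(\Z/N)^\times$ and a quadratic extension of $\Q$ is Galois, the quadratic subfields of $\Q(\zeta_N)$ biject with the nonzero homomorphisms $(\Z/N)^\times\to\Z/2$, so there are $2^{s}-1$ of them, where $s=\dim_{\mathbb{F}_2}(\Z/N)^\times/((\Z/N)^\times)^2$; by the Chinese remainder theorem $s=\#\{\text{odd primes }p\mid N\}+\delta_N$ with $\delta_N=0,1,2$ according as $4\nmid N$, $4\mid N$ but $8\nmid N$, or $8\mid N$ (the factor $(\Z/2^{a})^\times$, with $2^a\|N$, contributing $2$-rank $0,1,2$ for $a\le1$, $a=2$, $a\ge3$). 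On the other hand the listed generators of $H_N$ are $\mathbb{F}_2$-independent: the projection $\Q^\times/(\Q^\times)^2\to\bigoplus_{p}\mathbb{F}_2$ forgetting the $-1$ coordinate sends the various $p^\ast$ and $2$ to distinct standard basis vectors and $-1$ to $0$, and $-1\neq1$, so no nontrivial relation among the generators can hold. Hence $|H_N|=2^{s}$, so the $2^{s}-1$ quadratic subfields coming from $H_N$ already exhaust all of them, which gives ``$\Rightarrow$''. (Alternatively ``$\Rightarrow$'' follows by ramification --- an odd prime $p\mid d$ ramifies in $\Q(\sqrt d)$, forcing $p\mid N$, and the mod-$4$ and mod-$8$ conditions come from the ramification of $2$ --- but the counting argument pins down the exact power of $2$ more cleanly.)

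Granting the claim, (a)--(e) become pure bookkeeping. Write $k=2^{\varepsilon}\prod_i q_i\prod_j r_j$ with $\varepsilon\in\{0,1\}$, the $q_i\equiv1\pmod4$ and $r_j\equiv3\pmod4$ distinct odd primes, so $e$ equals the number of $r_j$. In $\Q^\times/(\Q^\times)^2$ we have $q_i=q_i^\ast$ and $r_j=-r_j^\ast$; hence if every odd prime dividing $k$ divides $N$ then $k\equiv2^{\varepsilon}(-1)^{e}\pmod{H_N}$, whereas if some odd prime of $k$ does not divide $N$ then its prime coordinate shows $k\notin H_N$. Therefore $\sqrt k\in\Q(\zeta_N)$ iff every odd prime dividing $k$ divides $N$ and $2^{\varepsilon}(-1)^{e}\in H_N$, and the latter holds automatically when $\varepsilon=0$ and $e$ is even, requires $4\mid N$ (to have $-1\in H_N$) when $\varepsilon=0$ and $e$ is odd, and requires $8\mid N$ (to have $2\in H_N$) when $\varepsilon=1$. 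Specialising $N=n$ to the various hypotheses then gives: if $2\mid k$ then $\sqrt k\in\Q(\zeta_n)$ forces $8\mid n$, hence $2\mid n$, so in every case $k\mid n$ --- this is (a), the first assertion being the case $k=2$; for odd $n$ the condition $4\nmid n$ kills the $\varepsilon=1$ and the $e$-odd cases, leaving $k$ odd with $e$ even and $k\mid n$, and $\Q(\zeta_n)=\Q(\zeta_{2n})$ holds since $-\zeta_n$ is a primitive $2n$-th root of unity --- this is (b); for $4\mid n$ with $8\nmid n$ only the $\varepsilon=0$ cases survive and $-1\in H_n$, giving $k$ odd with $k\mid n$ --- this is (c); for $8\mid n$ all cases survive with $-1,2\in H_n$, giving $k\mid n$ --- this is (d); and minimising $n$ over the conditions just found yields $n=k$ when $k$ is odd and $e$ is even, and $n=4k$ otherwise (namely $\lcm(4,k)$ when $k$ is odd and $\lcm(8,k/2)$ when $k$ is even, each equal to $4k$) --- this is (e).

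The main obstacle is the converse half of the key claim, i.e.\ ruling out quadratic subfields of $\Q(\zeta_N)$ other than those visible in $H_N$; this is where the Galois-theoretic dimension count of $(\Z/N)^\times$, matched against the $\mathbb{F}_2$-independence of the explicit generators of $H_N$, does the real work. Everything after that is elementary manipulation of squarefree factorisations.
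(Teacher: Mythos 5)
Your proposal is correct. Note that the paper itself gives no proof of this corollary at all: it is stated bare, as an (implicitly standard) consequence of Lemma \ref{lemPCyc}, so there is no argument of the authors to compare against. Your write-up supplies exactly the missing derivation in the expected way — Lemma \ref{lemPCyc} plus $\zeta_4$ and $\zeta_8+\zeta_8^{-1}$ give the containments, and the Galois-theoretic count (the $2$-rank of $(\Z/N)^\times$ matched against the $\mathbb{F}_2$-independent generators $p^\ast$, $-1$, $2$ of $H_N$) rules out any further quadratic subfields; the case bookkeeping for a)--e), including the minimality claim $n=k$ versus $n=4k$ via $\lcm(4,k)$ and $\lcm(8,k/2)$, is accurate. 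The only step left tacit is the standard fact that distinct nontrivial square classes give distinct quadratic fields, which you use when matching the two counts.
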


\begin{lemma}\label{lemHelpAllKi2}\label{lemAllKi}
\begin{ithm}
\item Let $n,k_1,\ldots,k_m$ be pairwise distinct positive squarefree integers and suppose there exists a prime $p\mid n$ with $p\nmid k_i$ for all $i$. Then $\sqrt{n}\notin\Q(\sqrt{k_1},\ldots,\sqrt{k_m})$.
\item Let $v=\sum_{i=1}^m r_i \sqrt{k_i}\in\SqrtField$ for rational $r_i\ne 0$ and pairwise distinct positive squarefree integers $k_i$. Then $v$ is a primitive element of $\Q(\sqrt{k_1},\ldots,\sqrt{k_m})$.
\end{ithm}
\end{lemma}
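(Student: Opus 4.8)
The plan is to prove part a) via the structure theory of multiquadratic fields, then to deduce from it the $\Q$-linear independence of $\sqrt{k_1},\dots,\sqrt{k_m}$, and finally to obtain part b) by a short Galois-theoretic argument.

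\textbf{Part a).} Let $Q$ be the finite set of primes dividing $k_1\cdots k_m$ and put $K_Q=\Q(\sqrt q\mid q\in Q)$, so that $\Q(\sqrt{k_1},\dots,\sqrt{k_m})\subseteq K_Q$. I would first prove, by induction on $|Q|$, the claim that $[K_Q:\Q]=2^{|Q|}$ and that, for a squarefree positive integer $d$, one has $\sqrt d\in K_Q$ if and only if every prime factor of $d$ lies in $Q$. The case $Q=\emptyset$ is trivial. For the inductive step pick $q\in Q$ and set $Q'=Q\setminus\{q\}$; by the inductive hypothesis applied with $d=q$ we get $\sqrt q\notin K_{Q'}$, hence $[K_Q:K_{Q'}]=2$ and $\{1,\sqrt q\}$ is a $K_{Q'}$-basis of $K_Q$. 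If $\sqrt d=a+b\sqrt q$ with $a,b\in K_{Q'}$, then squaring and comparing coefficients in this basis gives $ab=0$. If $b=0$ we conclude by induction; if $a=0$ then $b^{2}=d/q\in\Q$, which forces $q\mid d$ (otherwise $dq$ would be squarefree with $\sqrt{dq}=|qb|\in K_{Q'}$, contradicting the inductive hypothesis since $q\mid dq$ but $q\notin Q'$), and then $\sqrt{d/q}=|b|\in K_{Q'}$ finishes the argument by induction; the degree statement follows by multiplicativity. The converse is immediate from $\sqrt d=\prod_{q\mid d}\sqrt q$. Granting the claim, part a) is immediate: if $p\mid n$ with $p\nmid k_i$ for all $i$, then $p\notin Q$, so $\sqrt n\notin K_Q\supseteq\Q(\sqrt{k_1},\dots,\sqrt{k_m})$.

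\textbf{Linear independence.} Next I would show that distinct positive squarefree integers $k_1,\dots,k_m$ have $\Q$-linearly independent square roots. Suppose not, and choose a relation $\sum_{i\in I}c_i\sqrt{k_i}=0$ with all $c_i\neq0$ and $|I|\geq2$ minimal. If a prime $p$ divides $k_i$ for every $i\in I$, then dividing by $\sqrt p$ yields a shorter such relation among the distinct squarefree integers $k_i/p$, which is impossible. Hence some prime $p$ divides $k_i$ precisely for $i$ in a nonempty proper subset $J\subsetneq I$; writing $k_i=p\ell_i$ for $i\in J$ gives $\sqrt p\bigl(\sum_{i\in J}c_i\sqrt{\ell_i}\bigr)=-\sum_{i\in I\setminus J}c_i\sqrt{k_i}$, and the factor $\sum_{i\in J}c_i\sqrt{\ell_i}$ is nonzero by minimality. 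Therefore $\sqrt p$ lies in the field generated by $\{\sqrt{\ell_i}\mid i\in J\}\cup\{\sqrt{k_i}\mid i\in I\setminus J\}$, none of whose generators is divisible by $p$, contradicting part a).

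\textbf{Part b).} The field $K=\Q(\sqrt{k_1},\dots,\sqrt{k_m})$ is the splitting field over $\Q$ of $\prod_i(x^{2}-k_i)$, hence Galois, and each $g\in{\rm Gal}(K/\Q)$ acts by $g(\sqrt{k_i})=\varepsilon_i(g)\sqrt{k_i}$ with $\varepsilon_i(g)\in\{\pm1\}$. Since $\Q(v)\subseteq K$, it suffices to check that no nontrivial $g\in{\rm Gal}(K/\Q)$ fixes $v$. If $g(v)=v$, then $\sum_i(\varepsilon_i(g)-1)r_i\sqrt{k_i}=0$, so the linear independence just established together with $r_i\neq0$ forces $\varepsilon_i(g)=1$ for all $i$, i.e.\ $g$ fixes every $\sqrt{k_i}$ and hence $g=\mathrm{id}$. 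Thus ${\rm Gal}(K/\Q(v))$ is trivial and $\Q(v)=K$. The technical heart of the whole argument is the inductive claim in part a), and within it the case $a=0$ (where $\sqrt d=b\sqrt q$), in which one must derive $q\mid d$ to keep the induction from collapsing; I expect the reduction to a minimal counterexample for linear independence and the Galois step for b) to be routine.
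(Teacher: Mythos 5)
Your proof is correct and works with the same circle of ideas as the paper --- towers of quadratic extensions for a), then $\Q$-linear independence of the $\sqrt{k_i}$, then the sign action of the Galois group for b) --- but it is organized differently at both ends. For a), the paper inducts directly on $m$, adjoining $\sqrt{k_m}$ to $\Q(\sqrt{k_1},\ldots,\sqrt{k_{m-1}})$, writing $\sqrt n=r+s\sqrt{k_m}$ and squaring to force $\sqrt{k_m}$ into the smaller field; you instead prove the stronger structural claim that the multiquadratic field $K_Q$ generated by the square roots of the primes in $Q$ has degree $2^{|Q|}$ and contains $\sqrt d$ exactly when $d$ is supported on $Q$, from which a) is immediate. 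For b), the paper determines the order of the Galois group (after passing to a generating subset of size $s$ with $[K:\Q]=2^s$) and counts $2^s$ distinct conjugates of $v$ to force the minimal polynomial to have full degree; you avoid all counting by showing that the stabilizer of $v$ in $\mathrm{Gal}(K/\Q)$ is trivial and invoking the Galois correspondence, which is cleaner and sidesteps the paper's reduction to the subset of generators. Both routes hinge on a) through the linear independence of the $\sqrt{k_i}$, and your version of that step is, if anything, more explicit than the paper's about why one may solve for $\sqrt p$.

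One small repair is needed in your linear-independence argument. In the case where some prime $p$ divides every $k_i$, $i\in I$, dividing by $\sqrt p$ yields a relation among the $k_i/p$ with the \emph{same} number of terms, so it is not ``shorter'' and does not contradict the minimality of $|I|$. Fortunately that case never has to be excluded: since the $k_i$, $i\in I$, are pairwise distinct squarefree integers and $|I|\ge 2$, they cannot all have the same set of prime divisors, and at least one of them is divisible by some prime; hence a prime $p$ dividing $k_i$ precisely for $i$ in a nonempty proper subset $J\subsetneq I$ always exists, and your case-2 argument (minimality guaranteeing the coefficient of $\sqrt p$ is nonzero, part a) giving the contradiction, after discarding possible repetitions among the generators $\ell_i$ and $k_j$, which do not change the field) goes through verbatim. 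With that sentence replaced, the proof is complete.
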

\begin{proof}
\begin{iprf}
\item We use induction on $m$. The assertion clearly holds if  $m=1$, thus let $m>1$ and write  $K'=\Q(\sqrt{k_1},\ldots,\sqrt{k_{m-1}})$ and $K=K'(b)$ with  $b=\sqrt{k_m}$. Suppose that $\sqrt{n}\in K$. Since $\sqrt{n}\notin K'$ by the induction hypothesis, $b\notin K'$ and, therefore,  $\sqrt{n}=r+bs$ for unique $r,s\in K'$. Note that $s,r\ne 0$ since otherwise  $\sqrt{n}$ or $\sqrt{nk_m}/k_m$ would lie in $K'$, a contradiction.  Now squaring yields $b=(n-r^2-s^2k_m)/(2rs)\in K'$, the final contradiction.
\item Suppose $K=\Q(\sqrt{k_1},\ldots,\sqrt{k_m})=\Q(\sqrt{k_1},\ldots,\sqrt{k_s})$ has degree $d=2^s$  over $\Q$ with $s\leq m$. Since $K$ is the splitting field of the separable polynomial $(x^2-k_1)\ldots(x^2-k_s)$, the extension is Galois and therefore $\mathcal{G}=\textrm{Gal}(K/\Q)$ has order $d$. Clearly, every map defined by $\sqrt{k_i}\mapsto \pm\sqrt{k_i}$ for $i=1,\ldots,s$ gives rise to a Galois automorphism, and an order argument shows that  $\mathcal{G}$ consists exactly of these automorphisms. We now show that $1,\sqrt{k_1},\ldots,\sqrt{k_s}$ are linearly independent over $\Q$.  Clearly, this is true for $s=1$, so let $s\geq 2$. For a contradiction, assume $(\dagger)$ $\sum\nolimits_{i=1}^s r_i \sqrt{k_i}+r_{m+1}=0$ for rationals $r_i$. Let $p$ be a prime dividing $k_1\ldots k_s$. Now  $(\dagger)$  implies that $\sqrt{p}$ lies in the field generated by $\sqrt{k_1'},\ldots,\sqrt{k_s'}$ with $k_i'=k_i/\gcd(k_i,p)$, contradicting part a). Let $f$ be the minimal polynomial of $v$ over $\Q$. Clearly, $\gamma(v)$ is a root of $f$ for every $\gamma\in\mathcal{G}$. Since $\sqrt{k_1},\ldots,\sqrt{k_s}$ are $\Q$-linearly independent, it follows that  $\gamma(v)\ne \gamma'(v)$ for all $\gamma\ne\gamma'$ in $\mathcal{G}$. This shows that $f$ has at least $d$ different zeros, which implies that $f$ has in fact degree $d$ and  $v$ is primitive.
\end{iprf}
\end{proof}
\end{appendix}

\begin{small}
\bibliographystyle{amsplain}

\end{small}
\end{document}